\newtheorem{thm}{Theorem}
\newtheorem{prp}{Proposition}
\newtheorem{lem}{Lemma}
\theoremstyle{definition}
\newtheorem{rem}{Remark}
\theoremstyle{definition}
\newtheorem{defn}{Definition}
\newcommand{\SEnd}{\mathscr{E}\kern -.5pt nd}
\newcommand*{\SHom}{\mathscr{H}\kern -.5pt om}
\newcommand{\Rep}{\operatorname{Rep}}
\newcommand{\End}{\operatorname{End}}
\newcommand{\ol}[1]{\overline{#1}}
\newcommand{\hkq}{\mathbin{
  \mathchoice{/\mkern-6mu/\mkern-6mu/}
    {/\mkern-6mu/}
    {/\mkern-5mu/}
    {/\mkern-5mu/}}}
\newcommand{\Met}{\mathrm{Met}}
\newcommand{\cA}{\mathcal{A}}
\newcommand{\cB}{\mathcal{B}}
\newcommand{\cE}{\mathcal{E}}
\newcommand{\cG}{\mathcal{G}}
\newcommand{\cH}{\mathcal{H}}
\newcommand{\cM}{\mathcal{M}}
\newcommand{\cN}{\mathcal{N}}
\newcommand{\cO}{\mathcal{O}}
\newcommand{\cZ}{\mathcal{Z}}
\newcommand{\bbC}{\mathbb{C}}
\newcommand{\bbN}{\mathbb{N}}
\newcommand{\bbP}{\mathbb{P}}
\newcommand{\bbR}{\mathbb{R}}
\newcommand{\bbT}{\mathbb{T}}
\newcommand{\bbZ}{\mathbb{Z}}
\DeclareMathOperator{\tr}{tr}
\DeclareMathOperator{\Hom}{Hom}
\providecommand{\norm}[1]{\ensuremath{\left\lVert#1\right\rVert}}
\providecommand{\abs}[1]{\ensuremath{\left\lvert#1\right\rvert}}
\title{Nakajima Quiver Bundles} 
\author{Lisa Jeffrey \and Matthew Koban \and Steven Rayan}
\date{}			
\newcommand{\Addresses}{{
  \bigskip
  \footnotesize

  {\scshape L.~Jeffrey}, \textsc{Department of Mathematics, University of Toronto,
    Toronto, Ontario, Canada}\par\nopagebreak
     \textit{E-mail address}: \texttt{jeffrey@math.toronto.edu}

  \medskip

  {\scshape M.~Koban}, \textsc{Department of Mathematics, University of Toronto,
    Toronto, Ontario, Canada}\par\nopagebreak
   \textit{E-mail address}: \texttt{matthew.koban@mail.utoronto.ca}

  \medskip

  {\scshape S.~Rayan}, \textsc{Centre for Quantum Topology and its Applications (quanTA) and Department of Mathematics \& Statistics, University of Saskatchewan, 
    Saskatoon, Saskatchewan, Canada}\par\nopagebreak
    \textit{E-mail address}: \texttt{rayan@math.usask.ca}

}}
\begin{document}

\begin{abstract}
    We introduce the notion of a \emph{Nakajima bundle representation}. Given a labelled quiver and a variety or manifold $X$, such a representation involves an assignment of a complex vector bundle on $X$ to each node of the doubled quiver; to the edges, we assign sections of, and connections on, associated twisted bundles. We for the most part restrict attention in our development to algebraic curves or Riemann surfaces. Our construction simultaneously generalizes ordinary Nakajima quiver representations on the one hand and quiver bundles on the other hand. These representations admit gauge-theoretic characterizations, analogous to the ADHM equations in the original work of Nakajima, allowing for the construction of these generalized quiver varieties using a reduction procedure with moment maps. We study the deformation theory of Nakajima bundle representations, prove a Hitchin-Kobayashi correspondence between such representations and stable quiver bundles, examine the natural torus action on the resulting moduli varieties, and comment on scenarios where the variety is hyperk\"ahler. Finally, we produce concrete examples that recover known and new moduli spaces.

\end{abstract}
\maketitle

\setcounter{tocdepth}{1}

\section{Introduction}
 
Motivated by the study of Yang-Mills equations on $4$-dimensional ALE spaces, Nakajima \cite{NakajimaInstantons} had used linear representations of quivers to construct new examples of non-compact hyperk\"ahler varieties in arbitrary dimensions --- the so-called \emph{Nakajima quiver varieties}, which can be thought of as moduli spaces of these representations. This construction builds on a particular narrative threaded through the work of Gibbons-Hawking in type A \cite{GibbonsHawking}; of Atiyah-Drinfel'd-Hitchin-Manin \cite{ADHMinstantons} when the underlying quiver is of ADE type; and of Kronheimer-Nakajima \cite{NK:90} concerning Yang-Mills instantons over asymptotically locally Euclidean (ALE) gravitational instantons. The construction is also closely related to the McKay correspondence for finite subgroups of $\mbox{SU}(2)$ in the case where the underlying quiver is ADE.

Nakajima quiver varieties have been a mainstay in representation theory, in symplectic and complex algebraic geometry, and in mathematical physics. In foundational work \cite{HN:98}, Nakajima connected the aforementioned quiver varieties to integrable highest weight representations of simply-laced Kac-Moody algebras. Specifically, these representations admit an interpretation within the homology ring of these varieties, and the Kac-Moody algebra acts in a certain way via Hecke-type correspondences defined with respect to Lagrangian subvarieties of products of quiver varieties. These subvarieties in turn provide generators for the algebra.  At the same time, variation of stability parameters for the moduli problem, viewed either via GIT or through symplectic reduction, is a means for resolving symplectic singularities \cite{BS:21} --- this includes somewhat surprising cases such as where \emph{every} resolution of a particular quotient singularity in $4$ complex dimensions is realizable as a Nakajima quiver variety for a single quiver \cite{BCRSW:24}. Further still, Nakajima quiver varieties are similar in crucial ways to moduli spaces of stable Higgs bundles: both are non-compact hyperk\"ahler varieties equipped with an algebraic $\bbC^\times$ action, although one is a finite-dimensional quotient (Nakajima) and the other is an infinite-dimensional one (Hitchin). The relation between them is explored from the vantage point of character varieties in \cite{HLRV:11}, as well as in \cite{FisherRayan,RayanSchaposnik} where Nakajima quiver varieties of star- and comet-shaped quivers are embedded into moduli spaces of meromorphic Higgs bundles, thereby eliciting a sub-integrable system of the Hitchin system for certain flag types.

In parallel, Nakajima quiver varieties are crucial objects in symplectic duality. Mathematically, symplectic duality was first observed in \cite{BLPW} as a duality between pairs of symplectic resolutions, of which Nakajima quiver varieties are one of the best understood examples. This duality has been known to physicists since the work of Intriligator \cite{Intriligator} as a duality between $3$-dimensional supersymmetric quantum field theories. Out of such a quantum theory, one can construct two spaces of interest: the Higgs branch and the Coulomb branch. Mathematically, Higgs branches are better understood than the Coulomb branch, and until recently \cite{braverman2016towards} a concrete mathematical definition was not known. When the underlying quantum theory is a quiver theory, the Higgs branch recovers a (singular) Nakajima quiver variety. These developments interact closely with the appearance of Nakajima quiver varieties in symplectic resolutions.

A fundamental question concerns the behaviour of such quiver varieties when the linear representation data is generalized or ``globalized'' to other types of representations, such as those valued in vector bundles over a fixed variety. This is already motivated in the literature: first, in \cite{HitchinSelf-Duality} there is the first study of fixed points of the $S^1$-action on the moduli space of Higgs bundles on a Riemann surface $X$. Each fixed point is a Higgs bundle with a splitting of the underlying bundle for which the Higgs field has a strictly sub-diagonal matrix representation. As such, viewed through the lens of representation theory, the fixed points are representations of A-type quivers in the category of holomorphic bundles over $X$. Here, each vertex is assigned a holomorphic vector bundle $\cE_i$, and each edge is assign a section $\phi_i\in H^0(X,\Hom(\cE_i,\cE_{i+1}))$. Such representations are often referred to as ``holomorphic chains''. More generally (beyond $A$ type) they are referred to as \emph{quiver bundles}. Such objects also appear as solutions to quiver vortex equations \cite{HKcorrespondence}, which themselves capture many well-known correspondences in that the existence of solutions is equivalent to the existence of connections satisfying certain curvature constraints. Some recent works on the physics side involving quiver bundles and emphasizing connections with higher-dimensional branes include \cite{CollinucciSavelli,Tbraneswithin,BranesSymmetry}.

This brings us to the goal of the present paper: to produce a variety parametrizing representations of doubled quivers by complex vector bundles. Such representations will be referred to as ``Nakajima representations''. In doing so, quiver bundles will be constructed on a doubled quiver, and we provide a Hitchin-Kobayashi-type correspondence that identifies points in the moduli space of Nakajima representations with stable quiver bundles. This can be thought of in two ways: providing a relative or global version of Nakajima quiver varieties on the surface $X$, or as the study of quiver bundles in a holomorphic symplectic context. Either of these approaches is a point of view which, to the knowledge of the authors, has not been developed or emphasized in prior literature.  We do note the parallel work of Azam and the third-named author \cite{AR:24} that develops similar ideas related to quiver bundles, but in a categorical context around moduli stacks with homotopy-theoretic applications in mind.

\subsection{Set-up}
Consider the quiver $Q$ with a single vertex and no arrows. Upon fixing a rank $r$ and degree $d$, a quiver bundle is a holomorphic vector bundle of rank $r$ and degree $d$ on $X$. By the Narasimhan-Seshadri theorem \cite{NarasimhanSeshadri}, a stable holomorphic vector bundle on $X$ is equivalent to a complex vector bundle equipped with a hermitian connection $A$ whose curvature satisfies the equation $F_A =0$. One could then hope to recover $\cN_X(r,d)$ the moduli space of stable bundles on $X$ having rank $r$ and degree $d$, from representations of this quiver. With this in mind, the representation space for the quiver with a single vertex and no arrows should be given by the space of connections $\cA(\cE)$ that are compatible with the hermitian metric on $\cE$. A variety parametrizing representations is then precisely the moduli space of flat connections, and it was shown in \cite{HKcorrespondence} that such representations correspond to stable vector bundles. 
\par

Consider now the double quiver $\overline{Q}$ constructed from $Q$ by adding an additional copy of each edge, whose orientation is reversed. Since there are no arrows to double, $\overline{Q}$ coincides with the ordinary quiver $Q$. However it still makes sense to ask what representations of $\overline{Q}$ should be. The guiding principle for our approach is that in the construction of the Nakajima quiver variety, the reverse edges are represented by sections in the cotangent space of the original edge. For the ordinary quiver, a representation is (equivalent to) a stable vector bundle on $X$. Deformations of stable bundles $\cE$ are given by $H^1(X,\SEnd(\cE))$, and so by Serre duality the cotangent directions are $H^0(X,\SEnd(\cE)\otimes K)$ where $K$ is the canonical bundle of $X$. Thus the cotangent space of the space of stable bundles consists of all pairs $(\cE,\phi)$ where $\cE$ is a stable bundle and $\phi$ is a holomorphic $\SEnd(\cE)$-valued $1$-form.
\par
 
It was shown in \cite{SimpsonHiggsBundles} that, for curves $X$ of genus at least $2$, a stable Higgs bundle on $X$ is equivalent to a pair $(A,\phi)$, and the moduli space $\cM^\text{Higgs}_X(r,d)$ of Higgs bundles of rank $r$ and degree $d$ is in fact a hyperk\"ahler variety arising as a reduction of the Hitchin equations

\begin{align}\label{eq:Hitchin}
\mu^\cH_\bbR =\sqrt{-1}F_A &+ [\phi,\phi^*] \\
\mu_\bbC^\cH = &\overline{\partial}_\cA\phi \nonumber.
\end{align}


In order to recover Simpson's construction of $\cM^{\text{Higgs}}_X(r,d)$, a representation of the double quiver $\overline{Q}$ should be a compatible connection $A$ and a section $\phi\in \Omega^{1,0}(X,\SEnd(\cE))$. With this data, the space of quiver representations up to equivalence recovers precisely the moduli space of Higgs bundles on $X$.
\par

Now for a general quiver $Q$ associate a hermitian vector bundle $\cE_i$ with connection $A_i$ to each vertex $i$. If $a:v_i\rightarrow v_j$ is an arrow from vertex $i$ to vertex $j$, associate a bundle morphism $x_a:\cE_i\rightarrow \cE_j$ which can be identified with a global section of the bundle $\SHom(\cE_i,\cE_j)$ on $X$. By taking the $(0,1)$ part of the curvature of the connection, we get a holomorphic structure on $\cE_i$ allowing us to make sense of holomorphic sections. A section $x\in \Omega^0(X,\SHom(\cE_i,\cE_j))$ is holomorphic if it is holomorphic with respect to the operators $\overline{\partial}_{A_i}$ and $\overline{\partial}_{A_j}$. In \cite{HKcorrespondence} it was shown that there is a correspondence between representations satisfying the vortex equations and stable $Q$-sheaves. For the double quiver $\overline{Q}$, to capture the doubled info at the vertices, there must also be a section $\phi \in \Omega^{1,0}(X,\SEnd(E))$. For the reverse edges, one assigns a section $y_a$ dual to $x_a$. Taking all equivalence classes of representations of a given quiver which satisfy a curvature condition, \cref{eq:conjecturedmaps1,eq:conjecturedmaps2}, produces a variety $\cM_{\ol{Q}}^{r,d}(\tau)$ we are calling a \emph{Nakajima bundle variety}. These varieties are, in a sense, a global analogue of Nakajima quiver varieties.

\subsection{Organization}
The paper is organized as follows: \Cref{section:Nakajima Bundle Varieties} introduces representations of doubled quivers in the category of complex vector bundles that satisfy a moment map condition. \Cref{section:Quiver Bundles} provides a brief review of quivers and their representation theory before introducing the notion of $\ol{Q}$-bundles and stability for double quivers.  Then in \Cref{section:StablebundlesNakajima} it is shown that stable $\ol{Q}$-bundles correspond to solutions of the moment map equations, establishing a Hitchin-Kobayashi-type correspondence. \Cref{section: Torus Action} is concerned with a natural torus action on $\cM_{\ol{Q}}^{r,d}(\tau)$. When the underlying quiver consists of just a single vertex this action recovers the familiar torus action on the moduli space of Higgs bundles. Finally, \Cref{section:examples} outlines some preliminary examples of stable $\overline{Q}$ bundles. In particular, we show that new classes of stable objects arise from this framework.

\subsection*{Acknowledgments}
The second-named author gratefully acknowledges support from the Simons Center for Geometry and Physics where important progress was made during the $2$nd Simons Math Summer Workshop (``Moduli'') in July 2024. The second-named author was partially supported by an Ontario Graduate Scholarship and by the NSERC Discovery Grants of the other co-authors.  The second- and third-named authors thank Mahmud Azam, Kuntal Banerjee, Eric Boulter, Robert Cornea, and Evan Sundbo for useful discussions, and all three authors acknowledge Eckhard Meinrenken and Nick Rozenblyum for helpful remarks and questions during the early stages of the work.

\section{Nakajima Bundle Varieties}\label{section:Nakajima Bundle Varieties}

In the present section we define the main object of the current paper, Nakajima bundle representations, which are representations of double quivers by complex vector bundles. There is an action of the group $\cG=\prod_{v\in V}\text{Aut}(\cE_v)$ on the space of all such representations leading to real and complex moment maps. Taking quotients by the group action along level sets of the moment maps produces a variety $\cM_{\overline{Q}}(\tau)$ parametrizing Nakajima bundle representations. These varieties may be thought of as families of Nakajima quiver varieties on the Riemann surface $X$. 

A \emph{quiver} $Q$ consists of a tuple of data $(V(Q),E(Q),h,t)$, where $V(Q)=\{v_0,\cdots, v_n\}$ is the set of vertices, $E(Q)$ the set of edges $a:v_0\rightarrow v_1$, and $h,t:E(Q)\rightarrow V(Q)$ are functions, referred to as the head and tail maps respectively. If $a:v_0\rightarrow v_1$ is an edge starting from vertex $v_0$ and ending on vertex $v_1$, then the maps $t,h$ are defined as  $t(a)=v_0$ and $h(a)=v_1$.

Associated to any quiver $Q$, one can construct the double quiver $\ol{Q}$. The double quiver is a new quiver whose vertices and edges satisfy $V(\ol{Q}) = V(Q)$ and $E(\ol{Q}) = E(Q) \sqcup -E(Q)$, where $-E(Q)$ consists of each element of $E(Q)$ only with opposite orientation.

In order to define representations of a quiver, we require a set of labels. Given a quiver $Q$, a label of $Q$ is a pair $(r,d)$ where $r=(r_v)_{v\in V(Q)} \in \bbN^{\abs{V(Q)}}$ is the rank vector and $d=(d_v)_{v\in V(Q)}\in \bbZ^{\abs{V(Q)}}$ is the degree vector.

\begin{defn}
  Let $\overline{Q}$ be a double quiver. For each $v\in V(Q)$ let $\cE_v$ be a complex vector bundle over $X$. A \emph{Nakajima bundle representation} of $\overline{Q}$ is a collection $(A,\phi,x,y) = (A_v,\phi_v,x_a,y_a)$ indexed by the vertices and edges of $Q$. Here $A_v$ is a hermitian connection on $\cE_v$, $\phi_v\in \Omega^{1,0}(X,\SEnd(\cE_v))$, $x_a\in \Omega^0(X,\Hom(\cE_{t(a)},\cE_{h(a)}))$, and $y_a\in H^1(X, \Hom(\cE_{t(a)},\cE_{h(a)}))$.
\end{defn}


Let $\cA(\cE_v)$ denote the space of unitary connections on $\cE_v$. The space $\text{Rep}(\overline{Q})$ of Nakajima bundle representations of $\ol{Q}$ is the infinite-dimensional affine space 

\[\begin{array}{c}
\prod_{v\in V(Q)} \left(\cA(\cE_v) \times \Omega^{1,0}(X,\SEnd(\cE_v)) \right)\\\bigoplus_{a \in E(Q)}\left( H^0(X,\SHom(\cE_{t(a)},\cE_{h(a)})) \; \oplus \; H^1(X,\SHom(\cE_{h(a)},\cE_{t(a)})\otimes K)\right).\end{array}
\]

At each vertex $v$, the group $\cG_v\coloneqq \text{Aut}(\cE_v)$ of unitary bundle automorphisms acts on $\cE_v$, producing an action of the product group $\cG=\prod_{v\in V(Q)} \cG_v$ on $\text{Rep}(\overline{Q})$:

\[
g\cdot(A,\phi,x,y) = (g\cdot A, g_{v}\phi_v g^{-1}_{v}, g_{h(a)} x_ag^{-1}_{t(a)}, g_{t(a)} y_ag^{-1}_{h(a)}),
\] 

\noindent where $g\cdot A$ is the usual action on connections $g_vd_Ag_v^{-1}$.
\par

In \cite{HitchinSelf-Duality} it was shown that the action of $\cG_v$ on $\cA_v\times \Omega^{1,0}(X,\SEnd(\cE_v)\otimes \bbC)$ is Hamiltonian with real and complex moment maps given by the Hitchin equations \cref{eq:Hitchin}. This extends to the action of the product group $\cG$ on 

\[
\prod_{v\in V(Q)} \cA(\cE_v,h_v) \times \Omega^{1,0}(X,\SEnd(\cE_v)\otimes \bbC)
\]

\noindent whose real and complex moment maps are $\mu_\bbR^\cH =\sum_{v\in V(Q)}\sqrt{-1}F_{A_v}+[\phi_v,\phi_v^*]$ and 
\newline $\mu_\bbC^\cH = \sum_{v\in V(Q)}\overline{\partial}_{\cE_v}\phi_v$.

\subsection{Group action on edge representations}

Before studying the group action on the space of edge representations, we make note of some preliminaries. Let the space of edge representations be denoted $\text{Rep}(E(\overline{Q}))$.

\[
\text{Rep}(E(\overline{Q})) := \bigoplus_{a \in E(Q)} \left( H^0(X,\SHom(\cE_{t(a)},\cE_{h(a)})) \; \oplus \; H^1(X,\SHom(\cE_{h(a)},\cE_{t(a)})\otimes K)\right).
\]

The Hermitian metrics $H_v$ on each bundle $\cE_v$ induce a metric $H$ on $\text{Rep}(E(\overline{Q}))$ by $H(\zeta,\zeta') = \sum_v H_v(\zeta_v,\zeta_v')$. Given a smooth section $x\in \Omega^0(X,\SHom(\cE_i,\cE_j))$, the hermitian metrics on $\cE_i$ and $\cE_j$ define an adjoint $x^*\in \Omega^0(X,\SHom(\cE_j,\cE_i))$. Fix an open subset $U\subseteq X$, there is an $L^2$-inner product on the space of sections $\Gamma(U,\cE_v)$ so that

\[
(\xi_v,\xi'_v)_{L^2,H_v} = \int_U(\xi_v,\xi'_v)_{H_v} =\int _X (\xi_v,\xi'_v)_{H_v},
\]

\noindent with $L^2$-norm $\norm{\xi_v}^2_{L^2,H_v}=(\xi_v,\xi_v)_{L^2,H_v}$. Summing the inner products over all vertices $v$ yields the inner product on $\cE=\bigoplus_{v\in V(Q)}\cE_v$. The $L^p$ norm is defined analogously.  The metrics $H_v$ also induce metrics on the spaces $\Omega^0(X,\SHom(\cE_i,\cE_j))$ by $(\psi,\psi') = \int_X \operatorname{tr}(\psi_a\circ \psi_a^*)$. In particular these constructions will be important in \Cref{section:StablebundlesNakajima}.
\par

There are two symplectic forms $\omega_\bbR$ and $\omega_\bbC$ on $\text{Rep}(E(\overline{Q}))$:

\begin{align*}
\omega_\bbR((x,y),(x',y')) &= \sum_{a\in E(Q)} \int_X (\tr ((x_a(x_a')^* -y_a')^*y_a)\otimes 1_k)) \\
\omega_\bbC((x,y),(x',y')) &= \sum_{a\in E(Q)} \int_X \left( \tr((x_a\otimes \text{id}_K)y'_{a} -(x'_a\otimes \text{id}_K)y_{a}) \right).
\end{align*}

Recall the action of the group $\cG$ of unitary automorphisms of $\text{Rep}(E(\overline{Q}))$, where $g\in \cG$ acts by

\[
g\cdot (x,y) = (g_{h(a)}x_ag_{t(a)}^{-1}, g_{t(a)}y_{a}g_{h(a)}^{-1}).
\]

\begin{prp}The $\cG$-action is Hamiltonian with real and complex moment maps $\mu_\bbR,\mu_\bbC$ whose components at the vertex $i$ are given by

\begin{align}\label{eq:conjecturedmaps1}
\mu(x,y)_{i,\bbR} = \sum_{h(a)=i} \left( x_ax_a^* - y_{a}^*y_{a} \right)\otimes 1_K + \sum_{t(a)=i} \left( x_a^*x_a - y_{a}y_{a}^* \right)\otimes 1_K
\end{align}

\noindent and 

\begin{align}\label{eq:conjecturedmaps2}
\mu(x,y)_{i,\bbC} = \sum_{h(a)=i} (x_a \otimes \operatorname{id}_{K}) y_a -\sum_{t(a)=i} y_a (x_a \otimes \operatorname{id}_{K_a}).
\end{align}

\noindent Where the composite $ss'$ is given by composing the homomorphism and take the wedge product of the differential form. Since we have $y$ in degree $1$ sheaf cohomology, here we are looking at the restrictions of the $x_a$ and $\operatorname{id}_{K}$ to the overlaps $U_\alpha \cap U_\beta$ of coordinate charts on $X$.
\end{prp}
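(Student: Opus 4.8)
The plan is to verify directly the defining property of a moment map: for each Lie algebra element $\xi=(\xi_v)_{v\in V(Q)}\in\frakg=\bigoplus_{v}\Omega^0(X,\mathfrak{u}(\cE_v))$, the function $\langle\mu_\bbR,\xi\rangle$ (resp.\ $\langle\mu_\bbC,\xi\rangle$) on $\text{Rep}(E(\overline{Q}))$ should satisfy $d\langle\mu,\xi\rangle=\iota_{X_\xi}\omega$, where $X_\xi$ is the fundamental vector field of the $\cG$-action. First I would differentiate the action formula $g\cdot(x,y)=(g_{h(a)}x_ag_{t(a)}^{-1},\,g_{t(a)}y_ag_{h(a)}^{-1})$ at the identity to obtain the infinitesimal action
\[
X_\xi(x,y)_a=\bigl(\xi_{h(a)}x_a-x_a\xi_{t(a)},\;\xi_{t(a)}y_a-y_a\xi_{h(a)}\bigr),
\]
and then check that both $\omega_\bbR$ and $\omega_\bbC$ are $\cG$-invariant, so that the (linear) action is symplectic.

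The central observation is that the action is \emph{linear}, which lets me invoke the standard formula for the moment map of a linear symplectic action: when $w\mapsto X_\xi(w)$ is linear and $\omega$ is $\xi$-invariant, the quadratic function $\langle\mu(w),\xi\rangle=\tfrac12\,\omega(X_\xi(w),w)$ satisfies the moment map equation, since differentiating and applying antisymmetry together with $\omega(X_\xi u,v)+\omega(u,X_\xi v)=0$ recovers $\iota_{X_\xi}\omega$ exactly. The whole problem then reduces to evaluating $\tfrac12\omega_\bbR(X_\xi(x,y),(x,y))$ and $\tfrac12\omega_\bbC(X_\xi(x,y),(x,y))$ and matching the outcome, under the pairing $\langle\alpha,\xi\rangle=\sum_i\int_X\tr(\alpha_i\xi_i)$ identifying $\frakg^\ast$ with $\frakg$, against the stated vertex components.

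For the real moment map this is a direct trace computation: substituting $X_\xi$ into $\omega_\bbR$, expanding, and repeatedly using cyclicity of the trace together with the definition of the adjoints $x_a^\ast,y_a^\ast$ induced by the Hermitian metrics $H_v$, I would collect all terms multiplying a fixed $\xi_i$. Each arrow $a$ touches exactly two vertices, $h(a)$ and $t(a)$; grouping the $h(a)=i$ contributions (which assemble into $x_ax_a^\ast-y_a^\ast y_a$) and the $t(a)=i$ contributions (which assemble into $x_a^\ast x_a-y_ay_a^\ast$) reproduces \cref{eq:conjecturedmaps1}, with the $\otimes 1_K$ factors merely recording the $K$-twist on the bundle carrying $y_a$.

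The analogous computation for $\omega_\bbC$ is where the main obstacle lies, because $y_a$ is a class in $H^1(X,\SHom(\cE_{h(a)},\cE_{t(a)})\otimes K)$ rather than a smooth section. The products $(x_a\otimes\text{id}_K)y_a$ and $y_a(x_a\otimes\text{id}_K)$ of \cref{eq:conjecturedmaps2} must be read as the cup products $H^0\otimes H^1\to H^1$, which I would make explicit in \v Cech cohomology: represent $y_a$ by a cocycle $(y_{a,\alpha\beta})$ on overlaps $U_\alpha\cap U_\beta$ and let the global $x_a$, restricted to those overlaps, act by composition of homomorphisms and wedge of the differential-form parts, as indicated in the statement. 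The pairing against $\xi_i$ is then the Serre-duality trace pairing $H^0(\SEnd\cE_i)\times H^1(\SEnd\cE_i\otimes K)\to H^1(X,K)\cong\bbC$. The real work here is to confirm that these \v Cech-level products descend to well-defined cohomology classes and pairings, independent of cocycle representatives and of the chosen cover, with the holomorphicity of $x_a$ ensuring compatibility; granting this, the same reindexing over arrows incident to $i$ yields \cref{eq:conjecturedmaps2}.
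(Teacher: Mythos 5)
Your proposal is correct, and it reaches the formulas by a route that is organized differently from the paper's. The paper proceeds by ansatz-verification: it reduces (WLOG) to a quiver with a single edge, writes down the candidate function $f(x,y)=\sum_i\int_X\tr(\theta_i\mu(x,y)_i)$ with $\mu$ given by \cref{eq:conjecturedmaps1,eq:conjecturedmaps2}, and then differentiates $f$ directly, using linearity and cyclicity of the trace to check $df_x(z)=\omega([\theta,x],z)$. You instead invoke the general fact that for a \emph{linear} action preserving a constant symplectic form, $\langle\mu(w),\xi\rangle=\tfrac12\,\omega(X_\xi(w),w)$ is automatically a moment map, and then reduce the whole problem to the purely algebraic identity matching this quadratic expression against the stated vertex components (where, e.g., for $\omega_\bbC$ each term appears twice so the factor $\tfrac12$ cancels, giving exactly $x_ay_a$ at heads and $-y_ax_a$ at tails). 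What your route buys is that the formulas are \emph{derived} rather than guessed, and the Hamiltonian property comes for free once $\cG$-invariance of $\omega_\bbR,\omega_\bbC$ is checked; what the paper's route buys is self-containedness, needing no appeal to the linear-action lemma. The underlying trace manipulations are the same in both. One further point in your favor: you are more explicit than the paper's proof about the fact that $y_a$ lives in $H^1(X,\SHom(\cE_{h(a)},\cE_{t(a)})\otimes K)$, so the products $(x_a\otimes\operatorname{id}_K)y_a$ must be read as \v{C}ech-level cup products $H^0\otimes H^1\to H^1$ and the pairing with $\xi_i$ as a Serre-type trace pairing; the paper relegates this to a remark in the statement and suppresses it in the proof, so your attention to cocycle-independence is a genuine (if minor) improvement rather than a deviation.
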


\begin{proof}
Without loss of generality assume that $Q$ consists of a single pair of vertices with one edge between them. Then $\text{Rep}(E(\overline{Q}))= T^*\Omega^0(X,\Hom(\cE_v,\cE_u))$ and the group $\cG$ is $\cG(\cE_v)\times \cG(\cE_u)$. First it will be shown that the complex moment map $\mu_\bbC$ is given by \cref{eq:conjecturedmaps2}. Given an element $\theta\in \cG$ and a representation $x,y$, the component $\theta^x$ of the resulting vector field is given by 

\begin{align*}
\theta^x & = \frac{d}{dt}\lvert_{t=0} \text{exp}(t\theta)\cdot x \\
	     & = \frac{d}{dt}\lvert_{t=0} \text{exp}(t\theta_{h(a)})x \text{exp}(t\theta_{t(a)}) \\
	     &= \theta_{h(a)}x - x\theta_{t(a)}.
\end{align*}

Denote this vector field by $\theta^x =[\theta,x]$. Similarly we have $\theta^y=[\theta,y]\coloneqq\theta_{t(a)}y-y\theta_{h(a)}$. Let $\theta_v$ denote the component of $\mathfrak{g}$ at the vertex $v$. Then $\theta_v$ are skew-adjoint. To find the moment map, we must provide a function $f:\text{Rep}(E(Q))\rightarrow \bbC$ such that $\omega([\theta,x],z) = df_x(z)$. 
Define the function $f$ by $f(x,y) = \sum_i \int_X \operatorname{tr}(\theta_i \mu_\bbC(x,y)_i)$, where $\mu_\bbC(x,y)$ is as in \cref{eq:conjecturedmaps2}


Since trace is linear, and $d\mu_{(x,y)}(z,w) = \sum( xw+zy) + \sum (yz+wx)$. Using this to compute $df_x(z)$: 

\begin{align*}
df_x(z) &= \sum_i \int_X \operatorname{tr}(\theta_i d\mu_x(z)) \\
	   &= \sum_i \int_X \operatorname{tr}(\theta_i (\sum_{h(a)=i} (x_az_{a^*} +z_ax_{a^*}) - \sum_{t(a)=i} (x_{a^*}z_a + z_{a^*}x_a))) \\
	   &= \sum_i \int_X \operatorname{tr}(\sum_{h(a)=i} (\theta_{h(a)}x_az_{a^*} + \theta_{h(a)}z_a x_{a^*}) - \sum_{t(a)=i}( \theta_{t(a)}x_{a^*}z_a + \theta_{t(a)}z_{a^*}x_a)) \\
	   &= \omega_\bbC([\theta,x],z).
\end{align*}

\noindent where the last equality uses both linearity and cyclicity of tr$(A,B)$. Hence the complex moment map has the desired form.
\par

The same computation using the function $f_\bbR(x)=\sum_i\int_X \tr(\theta_i\mu_\bbR(x,y)_i)$, and $\mu_\bbR$ given by \cref{eq:conjecturedmaps1} recovers the desired real moment map.
\end{proof}

Therefore the action of the group $\cG$ on $\text{Rep}(\overline{Q})$ is Hamiltonian, and the moment maps are given by the sum of the moment maps coming from the vertices and that from the edges. The moduli space parametrizing Nakajima representations of $\overline{Q}$ can now be defined.

\begin{defn}
Let $\cZ(\mathfrak{g})$ denote the center of $\mathfrak{g}=\text{Lie}(\cG)$. For a type vector $(r,d)$ and $\tau= (\tau_\bbR,\tau_\bbC) \in \cZ(\mathfrak{g}) \oplus (\cZ(\mathfrak{g})\otimes \bbC)$ the \emph{Nakajima bundle variety at level} $\tau$, $\cM_{\overline{Q}}^{r,d}(\tau)$ is the moduli space of Nakajima representations which are solutions of the equations

\begin{align}
\mu_\bbR(A,\phi,x,y)_i &: \mu^\cH_{\bbR,i} + \sum_{h(v)=i} \left( x_vx_v^* - y_{-v}^*y_{-v} \right)\otimes 1_K + \sum_{t(v)=i} \left( x_v^*x_v - y_{-v}y_{-v}^* \right)\otimes 1_K = \tau  \label{eq:1}\\
\mu_\bbC(A,\phi,x,y)_i	  &: \mu_{\bbC,i}^\cH +  \sum_{h(v)=i} (x_v\otimes 1_K)y_v -\sum_{t(v)=i} y_v (x_v \otimes 1_K) =\tau.\label{eq:2}
\end{align}

That is, it is the quotient

\[
\cM_{\overline{Q}}^{r,d}(\tau) := \Rep(\overline{Q}) \hkq_\tau \cG = \left( \mu_{\bbR}^{-1}(\tau) \cap \mu_\bbC^{-1}(0) \right) / \cG.
\]

\end{defn}

\begin{rem}
  Despite having the appearance of a hyperk\"ahler reduction, the moduli spaces $\cM_{\ol{Q}}^{r,d}(\tau)$ may not have the structure of a hyperk\"ahler variety in general. The reason for this being that $\text{Rep}(\ol{Q})$ is not guaranteed to possess the necessary quaternionic structure. However, in many examples the quotient recovers a hyperk\"ahler variety. It is expected that if the genus of $X$ is at least $2$, and in the absence of any twisting line bundles, $\cM_{\ol{Q}}^{r,d}(\tau)$ will be hyperk\"ahler. 
\end{rem}

To compute the dimension of $\cM_{\ol{Q}}^{r,d}(\tau)$, one must examine the first-order deformations of a tuple $(A,\phi,x,y)$. Such a deformation is a tuple $(\dot{A},\dot{\phi},\dot{x},\dot{y})$ in the direct sum

\begin{equation}\label{eq:deformation}
\begin{array}{c}\bigoplus_{v\in V(Q)} \Omega^1(X,\SEnd(\cE_v)) \oplus \Omega^{1,0}(X,\SEnd(\cE_v))\\\oplus\\\bigoplus_{a\in E(Q)} \Omega^0(X, \SHom(\cE_{h(a)},\cE_{t(a)})) \oplus \Omega^{1,0}(X,\SHom(\cE_{t(a)},\cE_{h(a)})\otimes K).\end{array}
\end{equation}

Determining the space of all deformations involves computing the hypercohomology groups of the deformation complex associated with \cref{eq:deformation}. In this case, the resulting complex contains a grading by $\bbZ^4$. An alternative approach follows \cite[Sec. 5]{HitchinSelf-Duality} where index theory was used to compute the deformation space at a point in the moduli space of stable Higgs bundles. In \Cref{section:examples} the dimension of $\cM_{\ol{Q}}^{r,d}(\tau)$ is computed for specific examples.

We close this section with a few remarks regarding the definition above.
\begin{enumerate}[itemsep=0pt]
    \item The spaces $\cM_{\ol{Q}}^{r,d}(\tau)$ may depend on the level $\tau$. It is expected that $\cZ(\mathfrak{g})$ possesses a decomposition into chambers such that if $\tau$ and $\theta$ lie in the same chamber, the moduli space at level $\tau$ will be isomorphic to that at level $\theta$. The chamber structure of $\cM_{\overline{Q}}(\tau)$ will be left for future work.
    \item The ``Higgs fields'' $\phi_v$ are in general no longer holomorphic forms but holomorphic up to contributions from the $x_a,y_a$. Nakajima representations can therefore be considered to be a generalization of Higgs bundles.
    \item  Upon restricting to a point $x\in X$, and looking at the fibres $\cE_{v,x}$, a $\overline{Q}$-bundle restricts to a linear representation of the double quiver. Thus when the underlying manifold is a point $\cM_{\ol{Q}}^{r,d}(\tau)$ is a Nakajima quiver variety $\cM_{\overline{Q},p}(\tau)$. In this sense, a Nakajima bundle variety is a family of quiver varieties over $X$. However, this does not recover the representation of the quiver one might expect. When $\cE_v$ is a bundle over a genus $g\geq 2$ surface, whose rank is at least $2$ the presence of the Higgs fields $\phi$ serves to add a loop at vertex $v$.
    \item As is the case for usual Nakajima quiver varieties, the inclusion $\mu^{-1}_\bbR(\tau) \cap \mu^{-1}_\bbC(0) \rightarrow \mu^{-1}_\bbC(0)$, followed by the quotient map $\mu^{-1}_\bbC(0)\rightarrow \mu^{-1}_\bbC(0)//G$, gives a proper map $\cM_{\overline{Q}}(\tau) \rightarrow \cM_{\overline{Q}}(0)$.
\end{enumerate}

\section{$\ol{Q}$-Bundles}\label{section:Quiver Bundles}

The equations \cref{eq:1,eq:2} defined in the previous section may be thought of as vortex equations on a doubled quiver. In \cite{HKcorrespondence} it was shown that solutions of the vortex equations correspond to polystable quiver bundles. A quiver bundle over $X$ is a collection $(\cE_v,x_v)$ consisting of a holomorphic vector bundle $\cE_v$ over $X$ for each vertex $v\in V(Q)$, along with a bundle morphism $x_a: \cE_{t(a)}\longrightarrow \cE_{h(a)}$ for each edge $a \in V(Q)$. The main purpose of this section is to define the appropriate notion of a quiver bundle for a double quiver $\ol{Q}$, which we refer to as $\ol{Q}$-bundles, such that polystable $\ol{Q}$-bundles provide solutions to the doubled vortex equations. One of the main differences in the current work lies in the fact that the sections associated with edges in the quiver are no longer taken to be strictly holomorphic.

\begin{defn}
    Given a quiver $Q$ with double $\overline{Q}$, and labels $(r,d)$, a $\overline{Q}$-bundle $(\cE,\phi,x,y)$ over $X$ is a collection $(\cE_v,\phi_v,x_a,y_a)$ indexed by the vertices and edges of $\overline{Q}$, where $\cE_v$ is a holomorphic vector bundle of rank $r_v$ and degree $d_v$ on $X$, $\phi_v\in \Omega^{1,0}(X,\SEnd(\cE_v)\otimes \bbC)$, $x_a\in \Hom(\cE_{t(a)},\cE_{h(a)})$, and $y_a\in \Hom(\cE_{t(a)},\cE_{h(a)})^*$. A $\overline{Q}$-bundle $(\cE,\phi,x,y)$ is often denoted simply by $\cE$ when the context is clear.
\end{defn}

A morphism of $\overline{Q}$-bundles $f:(\cE,\phi,x,y)\rightarrow (\cE',\phi',x',y')$, is a collection $f_v:\cE_v \rightarrow \cE'_v$ of bundle morphisms such that the required squares commute for all $v$ and $a$.

\[\begin{tikzcd}
	{\mathcal{E}_v} & {\mathcal{E}_v} & {} & {\mathcal{E}_{t(a)}} & {\mathcal{E}_{t(a)}} && {\mathcal{E}_{t(a)}} & {\mathcal{E}_{h(a)}} \\
	{\mathcal{E}_v'} & {\mathcal{E}_v'} && {\mathcal{E}_{t(a)}'} & {\mathcal{E}_{t(a)}'} && {\mathcal{E}_{t(a)}'} & {\mathcal{E}_{h(a)}'}
	\arrow["{\phi_v}", from=1-1, to=1-2]
	\arrow["{f_v}"', from=1-1, to=2-1]
	\arrow["{f_v}", from=1-2, to=2-2]
	\arrow["{x_a}", from=1-4, to=1-5]
	\arrow["{f_{t(a)}}"', from=1-4, to=2-4]
	\arrow["{f_{h(a)}}", from=1-5, to=2-5]
	\arrow["{f_{t(a)}}"', from=1-7, to=2-7]
	\arrow["{y_a}"', from=1-8, to=1-7]
	\arrow["{f_{h(a)}}", from=1-8, to=2-8]
	\arrow["{\phi_v'}"', from=2-1, to=2-2]
	\arrow["{x'_a}"', from=2-4, to=2-5]
	\arrow["{y_a'}", from=2-8, to=2-7]
\end{tikzcd}\]

The composition of morphisms $f:(\cE,\phi,x,y) \rightarrow (\cE',\phi',x',y')$ and $g:(\cE',\phi',x',y') \rightarrow (\cE'',\phi'',x'',y'')$ is given by composing the individual bundle morphisms $g_v\circ f_v$. The identity morphism is given by the collection $1=(1_v)_{v\in V(q)}$, where $1_v$ is the identity morphism on $\cE_v$.

\begin{defn}\label{defn:subbundle}
If $(\cE,\phi,x,y)$ is a $\overline{Q}$-bundle, a $\overline{Q}$-subbundle consists of the data $(\cE',\phi',x',y')$ where $\cE'_v\subseteq \cE_v$ for all $v$, and such that the sections $\phi,x,y$ agree with $\phi',x',y'$ when restricted to the $\cE'_v$. An $\ol{Q}$-subbundle will often be referred to as a subbundle when there is no possibility of confusion.
\end{defn}

In order to identify $\overline{Q}$-bundles with solutions to \cref{eq:conjecturedmaps1,eq:conjecturedmaps2}, it is necessary to restrict to certain subsets of \emph{stable} $\overline{Q}$-bundles.

\begin{defn}
    Let $\cE=(\cE,\phi,x,y)$ be a $\overline{Q}$-bundle on a fixed Riemann surface $X$. For tuples of real numbers $\sigma=(\sigma_v)_{v\in V(Q)}$, $\tau= (\tau_v)_{v\in V(Q)}$ with $\sigma_v>0$, define the degree and the $\tau$-rank of $\cE$, $\operatorname{deg}(\cE),\operatorname{rk}_\tau(\cE)$
    
    \[
    \sigma\cdot \operatorname{deg}(\cE) \coloneqq \sum_{v\in V(Q)}\sigma_v \operatorname{deg}(\cE_v),\quad \operatorname{rk}_\tau(\cE) =\tau\cdot \operatorname{rk}(\cE) \coloneqq \sum_{v\in V(Q)} \tau_v \operatorname{rk}(\cE_v).
    \]
    
    The $(\sigma,\tau)$-slope of $\cE$, which depends on $\sigma,\tau$ is defined as

    \begin{equation}
       \mu_{\sigma,\tau}(\cE) = \frac{\sigma\cdot\operatorname{deg}(\cE)+ \tau \operatorname{rk}(\cE)}{\sum_{v\in V(Q)}\operatorname{rk}(\cE_v)}. 
    \end{equation}

A $\overline{Q}$-bundle $\cE$ is \emph{semi-stable} if for all subbundles $(\cE',\phi',x',y')$, the slope satisfies $\mu_{\sigma,\tau}(\cE') \leq \mu_{\sigma,\tau}(\cE)$. It is said to be \emph{stable} if the inequality is strict, and \emph{polystable} if $\cE$ is a direct sum of stable $\overline{Q}$-bundles all of which have the same slope.
\end{defn}

\begin{rem}\label{rem:1}
    It is always possible to set $\mu_{\sigma,\tau}(\cE)=0$ and take $\sigma=1$. Hence proving stability amounts to showing $\mu_{\sigma,\tau}(\cE')<0$ for all subbundles $\cE'$.  
\end{rem}

Stable $\overline{Q}$-bundles behave similarly to stable vector bundles. In particular, a standard proof as in \cite[5.7.12]{Kobayashi87} shows that stable $\overline{Q}$ bundles have no non-scalar automorphisms.

\begin{prp}\label{prp: stable implies simple}
If $(\cE,\phi,x,y)$ is stable, then it is simple.
\end{prp}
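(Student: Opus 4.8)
The plan is to adapt the classical Schur-type argument that a stable bundle is simple, in the form of \cite[5.7.12]{Kobayashi87}, to the category of $\ol{Q}$-bundles. The engine is the following lemma: \emph{any nonzero morphism $f\colon\cE\to\cE'$ of stable $\ol{Q}$-bundles with $\mu_{\sigma,\tau}(\cE)=\mu_{\sigma,\tau}(\cE')$ is an isomorphism.} Granting this, the proposition follows quickly. Let $f\colon\cE\to\cE$ be an endomorphism of $\ol{Q}$-bundles, pick any point $p\in X$, and let $\lambda$ be an eigenvalue of the fibrewise linear map $f_p\colon\bigoplus_v\cE_{v,p}\to\bigoplus_v\cE_{v,p}$. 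Then $f-\lambda\,\id$ is again an endomorphism of $\cE$, and it fails to be invertible on the fibre over $p$, hence is not an isomorphism. By the lemma it must vanish, so $f=\lambda\,\id$ is scalar and $\cE$ is simple.

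For the lemma, first I would verify that $\ker f$ and $\im f$ are $\ol{Q}$-subobjects. Since $f$ commutes with all the structure maps (the commuting squares in the definition of a morphism), if $f_v(s)=0$ then $f_v(\phi_v s)=\phi'_v f_v(s)=0$, and the same computation with the $x_a,y_a$ shows that the kernel is preserved by $\phi,x,y$; dually, the image is carried into itself by the structure maps. On a smooth curve, passing to the saturations of these subsheaves inside $\cE$ and $\cE'$ produces genuine $\ol{Q}$-subbundles of the same ranks and of degree no smaller, to which the stability hypothesis applies directly.

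Then I would run the usual see-saw on slopes. Because the numerator $\sigma\cdot\deg(-)+\tau\cdot\rk(-)$ and the denominator $\sum_v\rk(-)$ are both additive across short exact sequences of $\ol{Q}$-bundles, the quantity $\mu_{\sigma,\tau}$ behaves exactly like an ordinary slope. If $\ker f\neq0$, its saturation is a proper nonzero subbundle, so $\mu_{\sigma,\tau}(\ker f)<\mu_{\sigma,\tau}(\cE)$; the see-saw then forces $\mu_{\sigma,\tau}(\im f)=\mu_{\sigma,\tau}(\cE/\ker f)>\mu_{\sigma,\tau}(\cE)=\mu_{\sigma,\tau}(\cE')$, contradicting that the saturation of $\im f\subseteq\cE'$ has slope at most $\mu_{\sigma,\tau}(\cE')$ by stability of $\cE'$. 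Hence $\ker f=0$, and a second application of stability to $\im f\subseteq\cE'$ (now of full rank and slope equal to $\mu_{\sigma,\tau}(\cE')$) forces $\im f=\cE'$, so $f$ is an isomorphism.

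The only genuinely $\ol{Q}$-specific points, and the steps I would be most careful about, are confirming that the saturation of a $\ol{Q}$-subsheaf is still compatible with $\phi,x,y$ (so that stability, which is phrased for $\ol{Q}$-subbundles, really does apply) and that the first isomorphism theorem $\im f\cong\cE/\ker f$ holds in this category of twisted quiver data. Everything else is standard slope bookkeeping, made only cosmetically heavier by the vertex-indexed rank and degree vectors.
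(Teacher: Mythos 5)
Your proposal is correct and matches the paper's own route: the paper gives no argument beyond invoking ``a standard proof as in \cite[5.7.12]{Kobayashi87}'', and what you write out is exactly that Schur-type argument (nonzero morphisms between stable objects of equal $(\sigma,\tau)$-slope are isomorphisms, then the eigenvalue trick applied to $f-\lambda\,\id$), transported to $\ol{Q}$-bundles. The two points you flag for care --- that saturations of the kernel and image subsheaves remain compatible with $\phi,x,y$, and that slopes obey the see-saw because numerator and denominator of $\mu_{\sigma,\tau}$ are additive --- are precisely the details the paper leaves implicit in calling the proof standard.
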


When $Q$ is the quiver with one vertex and no edges then a $Q$-bundle is nothing but a vector bundle on $X$, and a $\overline{Q}-$bundle is a Higgs bundle on $X$. For an arbitary quiver, if the sections $y$ associated to the reverse edges vanish, a $\overline{Q}$-bundle is an ordinary quiver bundle. Moreover, one could consider twisting the bundle morphisms $x,y$ by some non-trivial line bundle $L$, hence the theory of $\ol{Q}$-bundles recovers many familiar objects as special cases.

\section{Stable $\overline{Q}$-bundles and Nakajima Representations}\label{section:StablebundlesNakajima}

The main result of the current section is a proof of the correspondence between Nakajima representations and $\overline{Q}$-bundles. This correspondence will identify points in the Nakajima bundle variety $\cM_{\overline{Q}}(\tau)$ with stable $\overline{Q}$-bundles. First it is shown that if a $\overline{Q}$-bundle admits a representation which satisfies \cref{eq:1}, then it must be polystable. The reverse identification is more involved, and will require a good deal of set up. The method of proof in both cases is similar to those used in the proofs of similar Hitchin-Kobayashi type correspondences.

\subsection{Solutions to moment map give stable objects}\label{subsection:moment map implies stable}

If $(\cE,\phi,x,y)$ is a $\overline{Q}$-bundle, a hermitian connection $A$ on $(\cE,\phi,x,y)$ is a collection $(A_v)_{v\in V(Q)}$, where $A_v$ is a hermitian connection on $\cE_v$. The curvature $F_A$ of a connection $A$ on $\cE$ is the collection $(F_{A_v})_{v\in V(Q)}$ of the curvatures of $A_v$.

Before proceeding we require the following description of the degree of a subbundle. Let $(\cE,\phi,x,y)$ be a $\overline{Q}$-bundle equipped with a hermitian metric $H$. Let $(\cE',\phi',x',y')$ be a subbundle. If $\pi:\cE\rightarrow \cE'$ denotes the $H$-orthogonal projection, then using the Chern-Weil formula, \cite[Lemma 3.2]{SimpsonVariationHodge}

\begin{equation}\label{eq:subdegree}
\text{deg}(\cE') = \left( \text{Vol}(X)^{-1} \int_X \operatorname{tr}(\pi \sqrt{-1}F_{A_H}) -\norm{\overline{\partial}_{\cE}(\pi)} \right). 
\end{equation}  

\begin{prp}\label{prp: moment map implies stable}
If $(A,\phi,x,y)$ satisfies the moment map equations, then the corresponding $\overline{Q}$-bundle $(\cE,\phi,x,y)$ is polystable. 
\end{prp}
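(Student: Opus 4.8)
The plan is to run a Donaldson--Uhlenbeck--Yau style Hitchin--Kobayashi argument, adapted to the $\ol{Q}$-bundle structure, using the Chern--Weil formula \cref{eq:subdegree} as the bridge between the analytic moment-map condition \cref{eq:1} and the algebraic slope inequality. Fix a solution $(A,\phi,x,y)$ of \cref{eq:1,eq:2}, let $(\cE',\phi',x',y')$ be an arbitrary $\ol{Q}$-subbundle, and assemble the $H$-orthogonal projections $\pi_v\colon\cE_v\to\cE'_v$ into a single projection $\pi=\bigoplus_v\pi_v$ of $\cE=\bigoplus_v\cE_v$. By \Cref{rem:1} I normalize $\sigma=1$ and $\mu_{\sigma,\tau}(\cE)=0$, so the goal reduces to $\mu_{\sigma,\tau}(\cE')\le 0$, with equality exactly when $\cE'$ splits off as a direct $\ol{Q}$-summand.

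First I would translate ``$\ol{Q}$-subbundle'' into relations for $\pi$: holomorphicity of $\cE'_v$ records $\overline{\partial}_{\cE_v}\pi_v$ as a second fundamental form, while invariance of $\cE'$ under the vertex and edge data reads $(\id-\pi)\phi\pi=0$, $(\id-\pi)x\pi=0$, and $(\id-\pi)y\pi=0$, with $\phi,x,y$ regarded as (possibly form-valued) endomorphisms of $\cE$. Applying \cref{eq:subdegree} at each vertex and summing gives $\deg(\cE')=\Vol(X)^{-1}\int_X\tr(\pi\,\sqrt{-1}F_A)-\norm{\overline{\partial}_\cE\pi}^2$. Substituting \cref{eq:1} to eliminate $\sqrt{-1}F_A$, and using that $\tau$ is central so that $\Vol(X)^{-1}\int_X\tr(\pi\tau)$ reproduces the $\tau$-dependent term of the slope, I expect to arrive at an identity of the schematic shape
\[
\mu_{\sigma,\tau}(\cE') = \mu_{\sigma,\tau}(\cE) - \frac{1}{\sum_v\rk(\cE_v)}\Big(\norm{\overline{\partial}_\cE\pi}^2 + \frac{1}{\Vol(X)}\int_X\big(\tr(\pi[\phi,\phi^*]) + \Phi_x(\pi) + \Phi_y(\pi)\big)\Big),
\]
where $\Phi_x,\Phi_y$ collect the edge contributions of \cref{eq:1}.

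The crux is to show that every term inside the parentheses is nonnegative once invariance is invoked. For the Higgs term this is the classical identity $\tr(\pi[\phi,\phi^*])=\norm{\pi\phi(\id-\pi)}^2$, which follows from $(\id-\pi)\phi\pi=0$ and cyclicity of the trace; I would establish the analogues giving $\Phi_x(\pi)\ge 0$ and $\Phi_y(\pi)\ge 0$, so that the whole correction term is $\ge 0$ and hence $\mu_{\sigma,\tau}(\cE')\le\mu_{\sigma,\tau}(\cE)$, i.e.\ semistability. For polystability I would then analyze the equality case: $\mu_{\sigma,\tau}(\cE')=0$ forces $\overline{\partial}_\cE\pi=0$ together with the simultaneous vanishing of the off-diagonal blocks $\pi\phi(\id-\pi)$, $\pi x(\id-\pi)$, and $\pi y(\id-\pi)$, so that $\id-\pi$ is holomorphic and also preserved by $\phi,x,y$. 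Thus $\cE\cong\cE'\oplus(\cE')^\perp$ as $\ol{Q}$-bundles, each summand inherits a solution of \cref{eq:1,eq:2}, and induction on the rank produces a decomposition into stable $\ol{Q}$-bundles of equal slope, which is precisely polystability.

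The step I expect to be the main obstacle is the correct handling of the reverse-edge data $y_a\in H^1(X,\SHom(\cE_{h(a)},\cE_{t(a)})\otimes K)$. Treated naively as smooth sections, the $y$-contributions enter the degree identity with the sign opposite to the $x$-contributions and would violate the inequality. The resolution is that $y$, being a class in sheaf cohomology, must be represented by a Dolbeault (harmonic) $(0,1)$-form dual to $\phi$, so that $y^*y$ and $yy^*$ are $(1,1)$-forms whose contraction carries exactly the extra sign needed to make $\Phi_y(\pi)\ge 0$ rather than spoil the estimate. Reconciling these signs together with the $\otimes 1_K$ normalizations, matching the moment-map level $\tau$ to the stability parameter, and checking that the equality case splits all four pieces of data $(\cE,\phi,x,y)$ at once, is the delicate part of the argument.
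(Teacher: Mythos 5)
Your overall skeleton is the same as the paper's proof: pair the real moment map equation \cref{eq:1} with the orthogonal projection $\pi$ onto the $\ol{Q}$-subbundle, convert the curvature term into $\deg(\cE')$ via the Chern--Weil formula \cref{eq:subdegree}, show the remaining correction terms are nonnegative, and handle polystability through the equality/splitting case (the paper arranges this last point by assuming WLOG that $\cE$ is indecomposable and proving the strict inequality of \Cref{rem:1}, which is equivalent to your semistability-plus-induction arrangement). The gap is exactly at the step you yourself call the crux: you never establish $\Phi_y(\pi)\ge 0$, and the mechanism you propose for it cannot work. Choosing a harmonic (or any other) Dolbeault representative for $y$ is irrelevant: the quantities $\tr(y_a^*y_a\pi)$ and $\tr(y_ay_a^*\pi)$ sit inside a pointwise algebraic identity, and no choice of representative of a cohomology class, nor any appeal to Serre duality with $\phi$, can reverse the sign of such an identity. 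Harmonicity plays no role anywhere in the paper's argument.

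The correct mechanism is pure linear algebra, parallel to the $x$-term, and rests on two structural facts. First, $y_a$ goes in the \emph{reverse} direction, $y_a\colon\cE_{h(a)}\to\cE_{t(a)}$, so invariance of $\cE'$ reads $(1-\pi_{t(a)})y_a\pi_{h(a)}=0$, whereas for $x_a$ it reads $(1-\pi_{h(a)})x_a\pi_{t(a)}=0$. Second, when the paired equations are summed over all vertices, each edge contributes a head-minus-tail \emph{difference} of traces (this relative sign between the head and tail contributions is the standard Nakajima moment map structure, and it is precisely what the paper's phrase ``each term in the left hand side appears twice, with opposite signs'' is invoking). Then, using the invariance relations and $\abs{\pi_{t(a)}y_a}^2=\abs{y_a\pi_{h(a)}}^2+\abs{\pi_{t(a)}y_a(1-\pi_{h(a)})}^2$,
\begin{align*}
\tr(x_ax_a^*\pi_{h(a)})-\tr(x_a^*x_a\pi_{t(a)}) &= \norm{\pi_{h(a)}x_a(1-\pi_{t(a)})}^2,\\
-\left(\tr(y_a^*y_a\pi_{h(a)})-\tr(y_ay_a^*\pi_{t(a)})\right) &= \norm{\pi_{t(a)}y_a(1-\pi_{h(a)})}^2,
\end{align*}
so the minus sign with which $y$ enters the moment map is exactly compensated by the reversal of direction, and the $y$-term carries the \emph{same} good sign as the $x$-term. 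Your ``naive'' computation goes wrong only because it pairs the head and tail sums with the same sign, producing sums of traces instead of differences; in that reading the $y$-contribution is indeed nonpositive, but that is a misreading of the moment map structure rather than a defect to be repaired analytically. With the display above substituted for your unproven step, the rest of your argument (Chern--Weil, nonnegativity, and the equality case, where vanishing of $\bar{\partial}\pi$, $\pi\phi(1-\pi)$, $\pi_{h}x(1-\pi_{t})$ and $\pi_{t}y(1-\pi_{h})$ makes the orthogonal complement a holomorphic $\ol{Q}$-complement, followed by induction on rank) goes through and reproduces the paper's proof.
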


\begin{proof}
Let $(\cE,\phi,x,y)$ be a $\ol{Q}$-bundle admitting a hermitian metric $\cA$ which satisfies \cref{eq:conjecturedmaps1,eq:conjecturedmaps2}. We may assume without loss of generality that $\cE$ is not a direct sum of subbundles. It must be shown that $\cE$ is stable. By \Cref{rem:1}, it is enough to show that $\mu_{\sigma,\tau}(\cE')<0$ for any proper subbundle $(\cE',\phi',x',y')$. Let $\cE'\subset \cE$ be a proper subbundle with projection $\pi:\cE\rightarrow \cE'$. Fix a vertex $k\in V(Q)$ and consider the sum 

\[
\sum_{a\in E(Q):k=h(a)}\int_X \operatorname{tr}((x_ax_a^*-y^*_ay_a)\pi).
\]

One may think of this sum as the inner product $(xx^*-y^*y,\pi)_{L^2,H}$ restricted to only those edges whose tail is the vertex $k$. By \cref{eq:1},

\begin{align*}
\sum_{a\in E(Q): k=h(a)}&\int_X \operatorname{tr}((x_ax_a^*-y_a^*y_a)\pi) =\sum \int_X \operatorname{tr}(x_a \pi_{t(a)}x_a^*\lvert_{\cE'_k} - \pi_k y_a^*y_a\lvert_{\cE'_k}) \\
									  &=\sum \int_X \operatorname{tr}(\pi_k ( x_a ( \pi_{t(a)} -1) x_a^*\vert_{\cE_k}) - \pi_k\sigma_k\sqrt{-1}\Lambda F_{A_k} - \pi_k[\phi_k,\phi_{k}^*] - \tau_k1_{\cE'_k}) \\
									  &= - \sum \int_X \operatorname{tr}( \pi_k x_a( 1 -\pi_{t(a)})\{ \pi_k x_a (1-\pi_{t(a)})\}^*) - \int_X \operatorname{tr}(\pi_k \sigma_k\sqrt{-1}F_{A_k}) \\
									  & - \sum \int_X \operatorname{tr}(\pi_k[\phi_k,\phi_{k}^*]) - \int_X \operatorname{tr}(\tau_k 1_{\cE'_k}).
\end{align*}

\noindent Now, summing over all $k\in V(Q)$, each term in the left hand side appears twice, with opposite signs. hence 

\begin{eqnarray}\label{eq:8}
~~\norm{\pi_k x_a( 1 -\pi_{t(h)}))}_{L^2,H} + ([\phi,\phi^*],\pi)_H & = & \sum - \int_X \operatorname{tr}(\pi_k \sigma_k\sqrt{-1}F_{A_k}) - \int_X \operatorname{tr}(\tau_k 1_{\cE'_k}) \\ \nonumber
																		     &= & - \sum\text{Vol}(X)\left( \sigma_k\text{deg}(\cE'_k) +\norm{\overline{\partial}_{\cE'}(\pi)}^2\right)\\\nonumber & & -\text{Vol}(X)\tau_k\text{rk}(\cE'_k).
\end{eqnarray}

\noindent One can check that $([\phi,\phi^*],\pi)_H =\norm{\pi\phi(1-\pi)}_{L^2,H}$. By assumption $\cE$ is indecomposable, and so one of $\norm{\pi x(1-\pi)},\norm{\pi\phi(1-\pi)}$ or $\norm{\bar{\partial}_\cE(\pi)}$ must be nonzero. Rearranging \cref{eq:8} then gives

\begin{equation}
0 > \sum_k\left( \sigma_k\text{deg}(\cE_k')- \tau_k\text{rk}(\cE'_k)\right) = \sigma\cdot \text{deg}(\cE) + \tau \text{rk}(\cE).
\end{equation}

\end{proof}

\subsection{Stable implies moment map solution}

The rest of \Cref{section:StablebundlesNakajima} is dedicated to proving the converse to \Cref{prp: moment map implies stable}: that a polystable $\overline{Q}$-bundle $(\cE,\phi,x,y)$ admits a hermitian metric which satisfies \cref{eq:1}. This result is proved by a method of variational calculus, and follows the logic of the proof for quiver bundles \cite{HKcorrespondence} and Higgs bundles \cite{SimpsonVariationHodge}. The main idea of the proof is to introduce a Lagrangian functional $\cM_\tau$, on the space of connections. The functional $\cM_\tau$ is constructed so that the minima of $\cM_\tau$ are precisely solutions to \cref{eq:1}. Following the construction of $\cM_\tau$ the next step is to show the existence of minimizing metrics. Proving existence is accomplished in two steps: First, \Cref{prp:Simplesolves} demonstrates that if a simple $\overline{Q}$-bundle admits a metric minimizing the Lagrangian, it satisfies \cref{eq:1}. Second, if the metric satisfies an $L^1$ bound then it minimizes the Lagrangian. The original work in this section amounts to extending the results in \cite{HKcorrespondence} to the current setting of double quivers in which not all the sections appearing in the equations are globally defined. In situations where results carry over unchanged from previous work, this will be noted. Throughout the remainder of the current section we work with a fixed $\overline{Q}$ bundle $(\cE,\phi,x,y)$ on $X$.

\begin{thm}\label{thm:1}
Suppose $(\cE,\phi,x,y)\in \mu_\bbC^{-1}(\tau_\bbC)$ is a polystable $\overline{Q}$ bundle over a compact Riemann surface $X$. Then $\cE$ admits a hermitian connection satisfying  \cref{eq:1}.
\end{thm}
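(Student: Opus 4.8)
The statement is the substantive (``stability $\Rightarrow$ existence of a solution'') half of the Hitchin--Kobayashi correspondence, so the plan is to adapt the variational argument of Simpson \cite{SimpsonVariationHodge} and of the quiver-bundle correspondence \cite{HKcorrespondence} to the doubled-quiver setting. Since $(\cE,\phi,x,y)\in\mu_\bbC^{-1}(\tau_\bbC)$, the complex equation \cref{eq:2} already holds and fixes the holomorphic data; the entire task is to produce a Hermitian metric $H$ on $\cE=\bigoplus_v\cE_v$ whose Chern connection satisfies the real equation \cref{eq:1}. First I would fix a smooth background metric $K$ and write every competitor as $H=Ke^s$ for a collection $s=(s_v)$ of $K$-self-adjoint endomorphisms, turning \cref{eq:1} into a single second-order elliptic equation for $s$.

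The engine of the proof is the Lagrangian $\cM_\tau$ alluded to in the text, a Donaldson-type functional obtained by integrating the one-form $\delta H\mapsto\int_X\tr\!\big(H^{-1}\delta H\,(\mu_\bbR-\tau)\big)$ along the path $H_t=Ke^{ts}$. I would establish its three structural properties in turn: the cocycle identity $\cM_\tau(K_0,K_2)=\cM_\tau(K_0,K_1)+\cM_\tau(K_1,K_2)$; convexity along the geodesics $H_t=Ke^{ts}$, where the Higgs term $[\phi,\phi^*]$ contributes a nonnegative second variation exactly as in \cite{SimpsonVariationHodge} and the edge data $x,y$ contribute nonnegative pieces just as the morphism terms do in \cite{HKcorrespondence}; and the fact that the Euler--Lagrange equation of $\cM_\tau$ is precisely \cref{eq:1}. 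This last point is the content of \Cref{prp:Simplesolves}, which asserts that a \emph{simple} $\overline{Q}$-bundle carrying a metric that minimizes $\cM_\tau$ satisfies \cref{eq:1}; convexity then guarantees that a critical metric is a minimizer. The problem therefore reduces to the \emph{existence of a minimizing metric}.

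To reduce to the simple case I would use that a polystable bundle is an orthogonal direct sum of stable summands of equal slope, each of which is simple by \Cref{prp: stable implies simple}; solving \cref{eq:1} on each summand and taking the block-diagonal metric handles the general polystable case. For a stable, hence simple, $\overline{Q}$-bundle I would run a minimizing procedure (the gradient flow of $\cM_\tau$, or the direct method) producing metrics $H_i=Ke^{s_i}$ with $\cM_\tau(K,H_i)$ decreasing to $\inf\cM_\tau$. Short-time existence, smoothing, and elliptic/parabolic bootstrapping are standard; the quoted fact that a metric satisfying a uniform $L^1$ bound is minimizing, together with convexity, then pins down the limit as a genuine minimizer, which by \Cref{prp:Simplesolves} solves \cref{eq:1}.

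The hard part is the a priori $C^0$ (equivalently $L^1$) estimate on $s_i$, and this is the only place stability enters. I would argue by contradiction in the Uhlenbeck--Yau manner: supposing $\sup_X\abs{s_i}\to\infty$, rescale $u_i=s_i/\norm{s_i}_{L^1}$, extract a weak $L^2_1$ limit $u_\infty\neq 0$, and use the convexity and lower semicontinuity of $\cM_\tau$ to show that the spectral projections of $u_\infty$ cut out a coherent, $H$-saturated $\overline{Q}$-subsheaf $\cE'\subset\cE$. The genuinely new bookkeeping relative to \cite{HKcorrespondence} is that $\cE'$ must be invariant not only under $\phi$ and the globally defined morphisms $x_a$, but also under the \emph{non-globally-defined} classes $y_a\in H^1(X,\SHom(\cE_{h(a)},\cE_{t(a)})\otimes K)$; here I would work with Dolbeault representatives and show that the negative-gradient inequality forces the off-diagonal $y$-blocks against the destabilizing filtration to vanish, exactly as it does for the $x$-blocks. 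By \Cref{rem:1} this $\overline{Q}$-subsheaf would then satisfy $\mu_{\sigma,\tau}(\cE')\geq 0$, contradicting stability. Establishing the invariance of $\cE'$ under the cohomological data $y_a$, and the attendant convergence estimates, is where I expect the real work of the proof to lie.
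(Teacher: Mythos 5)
Your proposal is correct and follows essentially the same route as the paper: a Donaldson-type Lagrangian $\cM_\tau$ whose minimizers on a simple $\overline{Q}$-bundle solve \cref{eq:1} (the paper's \Cref{prp:Simplesolves}), existence of minimizers reduced to a uniform $C^0$/$L^1$ estimate on $s$ (\Cref{prp:estimateimpliesmetric,prp:bounds}), and the estimate itself obtained from stability by an Uhlenbeck--Yau-style contradiction producing a destabilizing $\overline{Q}$-subsheaf (\Cref{prp:C^0estimate}). You also correctly single out the genuinely new difficulty the paper flags, namely handling the edge data $y_a$ that are only defined as degree-one cohomology classes rather than global sections.
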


\subsubsection{Preliminaries}\label{subsection:Preliminaries}

Before proving \Cref{thm:1}, it is necessary to give some preliminaries. Let $\Met_v$ denote the space of hermitian metrics on the vector bundle $\cE_v$. Fix a $K_v\in \Met_v$ such that the induced metric on the determinant line bundle satisfies $\sqrt{-1}F_{\text{det}(K_v)}=\text{deg}(\cE_v)$. Given a hermitian metric $H_v$, $S(H_v)$ denotes the space of smooth $H_v$-selfadjoint endomorphisms of the bundle $\cE_v$, and $S^0(H_v)$ the subspace of trace free endomorphisms. Any metric $H_v\in \Met_v$ is related to $K_v$ through a section $s_v\in S(K_v)$ by $H_v=K_v e^{s_v}$. The adjoint of a morphism with respect to a hermitian metric $H$ is denoted $x^{*_H}$. If the adjoint is with respect to the fixed metric $K_v$, it will be denoted $x^*$. 
\par

Given $x\in H^0(X,\SHom(\cE_v,\cE_{v'}))$ and $x'\in H^0(X,\SHom(\cE_{v'},\cE_v))$, define holomorphic sections $xx'^{*_H}, x^{*_H}x'$ and $[x,x'^{*_H}]$ in $H^0(X,\SEnd(\cE_{v'}))$ as follows:

\[
(xx'^{*_H})_v = \sum_{v\in h^{-1}(a)} x_ax_a'^{*_H}, \quad (x^{*_H}x')_v = \sum_{v\in t^{-1}(a)} x_a^{*H}x_a',\quad [x,x'^{*_H}] = xx'^{*_H} - x^{*_H}x'
\]

\noindent and similarly for the $y$. Also define $xy, yx\in H^1(\SEnd(\cE)\otimes K)$ by 

\[
(xy)_v=\sum_{v\in h^{-1}(a)} (x_a\otimes \text{id}_K)y_a, \;(yx)_v =\sum_{v\in t^{-1}(a)}y_ax_a
\]

where $x_a$ is restricted to two-fold coordinate overlaps. Let $F_H=\sum_{v\in V(Q)} F_{H_v}$, we can rewrite the real moment map \cref{eq:1} as

\begin{equation}
F_H +[\phi,\phi^*] + [x,x^*]\otimes 1_K - [y^*,y] \otimes 1_K = \tau\cdot \text{id}.
\end{equation}

Let $L^p_2S(K_v)$ denote the Sobolev space of $K_v$-selfadjoint sections of class $L^p_2$ and 
\newline 
$\Met(L^p_2S(K_v)) = \{ K_v e^{s_v} : s_v \in L^p_2S_v\}$ be the set of hermitian metrics $H_v$ on $\cE_v$ such that $H_v=K_ve^{s_v}$. This notation extends to the entire $\overline{Q}$ bundle $\cE$ by dropping the reference to the specific vertex $v$.  Given $H =K e^s\in \Met(L^p_2S(K))$, define the $H$-adjoint of a section $\xi$ by $\xi^{*_H}= e^{-s}\xi^* e^s$. Finally, define the connection $A_{H_v}$ by

\begin{equation}\label{eq: connection}
    d_{H_v} = d_{K_v} + e^{-s_v} \partial_{K_v}(e^{s_v})
\end{equation}

\noindent with curvature $F_{H_v} = F_{K_v} + \overline{\partial}_{\cE_v}(e^{-s_v}\partial_{K_v}(e^{s_v}))$.
\par

We will also require the following definitions, due to \cite{HKcorrespondence} and generalizing those of \cite{SimpsonVariationHodge}. Let $\psi:\bbR\rightarrow \bbR$, $\Psi:\bbR\times \bbR\rightarrow \bbR$ be functions and $s\in S$ a self-adjoint endomorphism.  Let $(u_{v,i})_i$ be an orthonormal basis with respect to $K_v$ of the fibre $\cE_{v,x}$ with dual basis $(u^{v,i})^i$. a section $s_v$ can be expressed in terms of the orthonormal bases  $s_v=\sum \lambda_{v,i} u_{v,i}\otimes u^{v,i}$. Define an endomorphism $\varphi(s)\in S$ and linear maps $\Psi(s):S\rightarrow S(\SEnd\cE)$ and $\Phi(s):S\rightarrow S(\mathrm{End}(\cE,\phi,x,y))$ as follows:

\[
\psi(s_v)(p)= \sum_i \psi(\lambda_{v,i})u_{v,i}\otimes u^{v,i}
\]

\noindent and for $f\in S$ with $f_v(p) = \sum_{i,j}f_{v,ij}u_{v,i}\otimes u^{v,j}$, 

\[
\Psi(s_v)f_v(p) = \sum_{i,j} \Psi(\lambda_{v,i},\lambda_{v,j})f_{v,ij}u_{v,i}\otimes u^{v,j}.
\]

\noindent If $\Psi$ is given by $\Psi(p,q)=\varphi_1(p)\varphi_2(q)$ for functions $\varphi_1,\varphi_2:\bbR\rightarrow \bbR$, then

\[
(\Psi(s)\phi)_a = \psi_1(s_{h(a)})\phi_a\psi_2(s_{t(a)})
\]

\noindent and similarly for $\Psi(s)x$ and $\Psi(s)y$.

Lemma $3.1$ of \cite{HKcorrespondence}, repeated below, is an important technical result needed for the proof of \Cref{thm:1}.

We will require the following lemma

\begin{lem}
[\cite{HKcorrespondence}] \hfill
\newline
\begin{enumerate}
    \item For some $b'$, the map $\psi: S \rightarrow S$ extends to a continuous map \newline $\varphi: L^2_{0,b}\rightarrow L^2_{0,b'}$.
    \item For some $b', q \leq 2$, $\varphi$ extends to a map $\varphi: L^2_{1,b}\rightarrow L^q_{1,b'}$. If $q<2$ then it is continuous.
    \item For $q\leq 2$, $\Psi: S\rightarrow S(\SEnd(\cE))$ extends to a map$$\Psi: L^2_{0,b}\rightarrow \Hom(L^2\Omega^0(\SEnd(\cE)),L^q\Omega^0(\SEnd(\cE))),$$which is continuous for $q<2$.
    \item For some $b'$, $\Psi:S\rightarrow S(\End(\cE,\phi,x,y))$ extends to a continuous map $\Psi:L^2_{0,b}S\rightarrow L^2_{0,b'}S(\End(\cE,\phi,x,y))$.
    \item The above maps extend to smooth maps $\psi:L^p_2S\rightarrow L^p_2S,\Psi: L^p_2S\rightarrow L^p_2S(\SEnd(\cE))$, and $\Psi: L^p_2S\rightarrow L^p_2S(\End(\cE,\phi,x,y))$.
\end{enumerate}    
\end{lem}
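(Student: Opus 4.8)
The plan is to treat this as a functional-analytic statement about the spectral (functional-calculus) operations $s\mapsto\psi(s)$ and $s\mapsto\Psi(s)$, following essentially verbatim the argument of \cite{HKcorrespondence} (itself modelled on \cite{SimpsonVariationHodge}). The one genuinely new point is the presence of the reverse-edge data $y_a$, which lives in a degree-$1$ cohomology group rather than in $H^0$; I would address this at the end by passing to smooth Dolbeault representatives so that the fibrewise analysis is identical to that for $\phi$ and $x$. First I would record the pointwise structure: at each point $\psi(s)$ is obtained by applying $\psi$ to the eigenvalues of the self-adjoint endomorphism $s$, while $\Psi(s)$ acts on the block with eigenvalue pair $(\lambda_i,\lambda_j)$ by the scalar $\Psi(\lambda_i,\lambda_j)$. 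The basic observation underlying all five clauses is that these operations are controlled fibrewise by $\sup$-norms of $\psi$ and $\Psi$ over the spectral range of $s$, and that the Daleckii--Krein formula identifies the Fréchet derivative of $s\mapsto\psi(s)$ with $h\mapsto\Psi(s)h$, where $\Psi$ is the divided difference of $\psi$. This reduces everything to Sobolev-multiplication and embedding estimates on the compact surface $X$.

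Next I would prove clauses (1), (3), (4) at the $L^2$ level. For (1), the pointwise bound $|\psi(s)|\le\sup|\psi|$ on the range of $s$, together with dominated convergence, gives continuity $L^2_{0,b}\to L^2_{0,b'}$. For (3) and (4) the same fibrewise bound shows that $\Psi(s)$ is a bounded multiplier on $L^2$-sections of $\SEnd(\cE)$, respectively of $\End(\cE,\phi,x,y)$; the only subtlety is that multiplying an $L^2$ multiplier against an $L^2$ section costs integrability, which is precisely why the target is $L^q$ with $q\le2$ and continuity is asserted only for $q<2$. For the differentiated clause (2) I would use the pointwise identity $\nabla(\psi(s))=\Psi(s)(\nabla s)$: since on a real surface $s\in L^2_1$ embeds into $L^r$ for every finite $r$ but \emph{not} into $L^\infty$, Hölder forces the product $\Psi(s)\,\nabla s$ into $L^q$ for $q<2$, with the endpoint $q=2$ lost, giving the map $L^2_{1,b}\to L^q_{1,b'}$.

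I would then treat the cohomological data entering clause (4). Fixing smooth $(0,1)$-form Dolbeault representatives $y_a\in\Omega^{0,1}(X,\SHom(\cE_{h(a)},\cE_{t(a)})\otimes K)$, the action $(\Psi(s)y)_a$ is, fibrewise, just multiplication of the bundle factor of $y_a$ on each side by $\psi_1(s_{h(a)})$ and $\psi_2(s_{t(a)})$ — exactly the pointwise shape of the action on $\phi$ and on $x$ — so the multiplier estimates above apply unchanged once the harmless differential-form factor is carried along; independence of the chosen representative up to the relevant equivalence is immediate because $\Psi(s)$ is fibrewise $\bbC$-linear and leaves the form factor untouched. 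Finally, for the smoothness clause (5) I would bootstrap: on a surface $L^p_2\hookrightarrow C^0$ for $p>1$, so $L^p_2S$ consists of sections with uniformly bounded spectrum and is a Banach algebra. The borderline loss therefore disappears at this regularity, and the higher Fréchet derivatives of $s\mapsto\psi(s)$ are given by the multilinear higher divided differences of $\psi$, each of which is a bounded multiplier on $L^p_2$; this yields smoothness of all three maps.

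The main obstacle I expect is the borderline Sobolev embedding on the real two-dimensional $X$: because $L^2_1\not\hookrightarrow L^\infty$, the spectral multiplier $\Psi(s)$ fails to be essentially bounded, and the estimates must track precisely where the endpoint exponent $q=2$ can and cannot be reached (hence continuity only for $q<2$). The secondary, genuinely new, point — extending the estimates to the data $y$ — is comparatively soft once one commits to smooth Dolbeault representatives, since the functional calculus acts only on the bundle factors and the analysis then reduces to the cases already handled for $\phi$ and $x$.
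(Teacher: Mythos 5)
First, a calibration point: the paper does not prove this lemma at all --- it is quoted verbatim from \cite{HKcorrespondence} (Lemma 3.1 there, which in turn adapts Simpson's construction in \cite{SimpsonVariationHodge}) and used as a black box. So your sketch must be measured against the cited source's argument, whose overall route --- fibrewise spectral calculus, divided differences (Daleckii--Krein), Sobolev embedding on the compact surface, Banach-algebra bootstrapping for smoothness --- you have correctly identified. Clauses (1) and (5) of your sketch are essentially the cited argument.

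There are, however, two genuine flaws. The first is your mechanism for the exponent loss $q<2$ in clauses (2)--(3). The subscript $b$ in $L^2_{k,b}$ is an almost-everywhere pointwise bound $\abs{s}\leq b$, not a Sobolev-norm bound; this is forced already by clause (1), since for the relevant $\psi$ (e.g.\ $\psi(t)=e^t$, which is what enters the Donaldson functional) the section $\psi(s)$ would not even lie in $L^2$ without an $L^\infty$ bound on the spectrum. Granting this bound, $\Psi(s)$ is an \emph{essentially bounded} multiplier, with $\abs{\Psi(s)f}\leq \big(\sup_{[-b,b]^2}\abs{\Psi}\big)\abs{f}$ pointwise, so $\Psi(s)\nabla s$ lies in $L^2$, not merely in $L^q$ for $q<2$. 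Your H\"older argument (``$L^2_1\not\hookrightarrow L^\infty$ forces the product into $L^q$, $q<2$'') contradicts the hypothesis and misses where the restriction actually arises: it is needed only for \emph{continuity} and for the Cauchy estimate defining the extension. Concretely, if $s_n\to s$ in $L^2$ with $\abs{s_n}\leq b$, then $\Psi(s_n)\to\Psi(s)$ a.e.\ and boundedly, hence in $L^r$ for every finite $r$ but in general not in $L^\infty$; by H\"older, the $\Hom(L^2,L^q)$-operator norm of $\Psi(s_n)-\Psi(s)$ is controlled by its $L^{2q/(2-q)}$-norm, and this exponent is finite precisely when $q<2$. Likewise, in clause (2) the term $(\Psi(s_n)-\Psi(s_m))\nabla s_m$ must be estimated with $\nabla s_m$ only \emph{bounded} in $L^2$ (not a fixed dominating function), which again costs the endpoint. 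The second flaw is your claim that independence of the chosen Dolbeault representative of $y$ ``is immediate.'' It is false: $\Psi(s)$ does not commute with $\overline{\partial}$ (the sections $s$ are merely Sobolev, not holomorphic), so replacing $y_a$ by $y_a+\overline{\partial}\eta$ changes $\Psi(s)y_a$ by $\Psi(s)\overline{\partial}\eta\neq\overline{\partial}\bigl(\Psi(s)\eta\bigr)$, which is not exact in general. The operation is therefore only defined on fixed representatives --- harmonic ones, or the \v{C}ech data on coordinate overlaps that the paper uses elsewhere --- and clause (4) must be read, and proved, in that sense.
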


\subsubsection{Donaldson Lagrangian}\label{subsubsection:Lagrangian}
Now to define a Lagrangian functional on $\Met(L^p_2S(K))$, the minima of which will correspond to solutions of \cref{eq:1}. Our definition builds on Simpson's definition of the Donaldson Lagrangian in \cite[p.883]{SimpsonVariationHodge} and was first used in \cite{DonaldsonStable,DonaldsonYM}. Let $\Psi:\bbR\times \bbR \rightarrow [0,\infty)$ be given by

\[
\Psi(x,y) = \frac{e^{y-x}-(y-x)-1}{(y-x)^2}.
\]

The Donaldson Lagrangian $M_{D,v}=M_D(K_v,\cdot):\Met(L^p_2S(K_v))\rightarrow \bbR$ is 

\begin{equation}
M_{D,v}(H_v) = (\sqrt{-1} F_{K_v}, s_v)_{L^2} + \left( \Psi(s_v)(\overline{\partial}_{\cE_v}s_v),\overline{\partial}_{\cE_v}s_v\right)_{L^2}.
\end{equation}

The Donaldson Lagrangian can be extended to a functional on $\Met(L^p_2S(K))$ as 
\newline $M_D(H) = \sum_{v\in V(Q)} M_{D,v}(H_v)$. 

\begin{defn}\label{defn:Lagrangian}

Let $\cM_{\tau}(\cdot):\Met(L^p_2S(K_v))\rightarrow \bbR$ be defined by the formula

\begin{equation}
\cM_{\tau}(H) = M_D(H) +\norm{\phi}^2_{L^2,H} -\norm{\phi}^2_{L^2,K} +\norm{x}^2_{L^2,H} -\norm{x}^2_{L^2,K} + \norm{y}^2_{L^2,H} -\norm{y}^2_{L^2,K} - (s,\tau\cdot \text{id})_{L^2}.
\end{equation}
\end{defn}

\begin{lem}
If $H\in \Met(L^p_2S)$, and $\psi:\bbR\times \bbR\rightarrow \bbR$ is given by $\psi(p,q)=e^{p-q}$ then

\[
\cM_{\tau}(H) = M_D(H) +(\psi(s)\phi,\phi)_{L^2} -\norm{\phi}_{L^2}^2 + (\psi(s)x,x)_{L^2} - \norm{x}^2_{L^2} + (\psi(s)y,y)_{L^2} - \norm{y}^2_{L^2}.
\]
\end{lem}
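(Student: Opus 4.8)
The plan is to recognize that passing from the definition of $\cM_\tau$ to the asserted formula is purely a matter of rewriting the three metric-dependent norms $\norm{\phi}^2_{L^2,H}$, $\norm{x}^2_{L^2,H}$, $\norm{y}^2_{L^2,H}$; the fixed-metric norms $\norm{\,\cdot\,}^2_{L^2,K}$ are, by convention, abbreviated $\norm{\,\cdot\,}^2_{L^2}$, and the Donaldson term $M_D(H)$ together with the remaining $s$-linear term are transcribed unchanged. Hence the whole statement reduces to the single identity
\[
\norm{\zeta}^2_{L^2,H} = (\psi(s)\zeta,\zeta)_{L^2}, \qquad \zeta\in\{\phi,x,y\},
\]
where the right-hand inner product is taken with respect to the fixed metric $K$ and $\psi(s)$ is the operator attached to $\Psi(p,q)=e^{p-q}$, i.e.\ to $\varphi_1(p)=e^{p}$, $\varphi_2(q)=e^{-q}$.

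To prove this identity I would start from $H=Ke^{s}$ and the $H$-adjoint formula $\zeta^{*_H}=e^{-s}\zeta^*e^{s}$, interpreted along each edge so that the exponential factors sit at the correct vertices: for $x_a\colon\cE_{t(a)}\to\cE_{h(a)}$ one has $x_a^{*_H}=e^{-s_{t(a)}}x_a^*e^{s_{h(a)}}$. Expanding the $H$-norm with the $K$-trace gives $\norm{x_a}^2_{L^2,H}=\int_X\tr\!\big(x_a\,e^{-s_{t(a)}}x_a^*\,e^{s_{h(a)}}\big)$, and cyclicity of the trace moves $e^{s_{h(a)}}$ to the front, yielding $\int_X\tr\!\big(e^{s_{h(a)}}x_a\,e^{-s_{t(a)}}\,x_a^*\big)$. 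By the definition of $\Psi$ recalled above, $(\psi(s)x)_a=e^{s_{h(a)}}x_a\,e^{-s_{t(a)}}$, so the integrand is exactly $\tr\!\big((\psi(s)x)_a\,x_a^*\big)$, which is the $K$-inner-product integrand of $\psi(s)x$ against $x$. Summing over edges gives $\norm{x}^2_{L^2,H}=(\psi(s)x,x)_{L^2}$. The identical computation handles $\phi$, where $h(a)=t(a)=v$ so that $(\psi(s)\phi)_v=e^{s_v}\phi_v e^{-s_v}$, and $y$, for which one simply interchanges the roles of head and tail of the edge. Substituting the three identities into \Cref{defn:Lagrangian} produces the claimed formula.

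The only point requiring genuine care is the bookkeeping: one must fix the inner-product convention $(\zeta,\zeta')_{K}=\int_X\tr(\zeta\zeta'^*)$ so that cyclicity delivers precisely $(\psi(s)\zeta,\zeta)$ rather than its conjugate, and one must keep each exponential factor attached to its correct vertex --- this is where the head/tail asymmetry between $x$ and $y$ enters. The differential-form degrees of $\phi$ (a $(1,0)$-form) and of the Dolbeault representative of $y$ (a $(0,1)$-form) are inert here: $\psi(s)$ acts only on the endomorphism factor, so the pointwise form-norm is common to both sides of each identity and merely integrates along. In this sense the lemma is the verbatim analogue of the corresponding step in \cite{HKcorrespondence}, the only new feature being that the Higgs term $\phi$ and the dual pair $(x,y)$ are all present at once.
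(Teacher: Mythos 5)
Your proof is correct and takes essentially the same route as the paper's: expand the $H$-adjoint $\zeta^{*_H}=e^{-s}\zeta^{*}e^{s}$ with the exponential factors attached to the correct vertices, use cyclicity of the trace, and recognize the result as $(\psi(s)\zeta,\zeta)_{L^2}$ taken with respect to the fixed metric $K$. The only cosmetic difference is that the paper invokes \cite[Lem.~3.2]{HKcorrespondence} for the $\phi$ and $x$ terms and writes out only the new $y$ computation, whereas you write out $x$ and observe that $\phi$ and $y$ follow by the identical calculation with head and tail interchanged for $y$.
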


\begin{proof}
The only term which does not follow immediately from \cite[lem 3.2]{HKcorrespondence} is $(\psi(s)y,y)_{L^2}$. By definition
$y_a^{*H_a}= (e^{-s_{h(a)}})\circ y^{*K_a}\circ e^{s_{t(a)}}$ and $(\psi(s)y)_a = e^{s_{t(a)}}\circ y_a \circ e^{-s_{h(a)}}$. Then we have

\begin{align*}
\norm{y_a}^2_{L^2,H_a} = \int_X \operatorname{tr}(y_ay_a^{*H_a}) &= \int_x \operatorname{tr}(y_a \circ (e^{-s_{ha}}\circ y_a^{*K_a}\circ e^{s_{ta}})) \\
&= \int_X \operatorname{tr}(e^{s_{ta}}\circ y_a \circ e^{s_{ha}}y_a^{*K_a}) \\
&= ((\psi(s)y)_a,y_a)_{K_a}.
\end{align*}

\end{proof}

Now, define $m_{\sigma,\tau}:\Met^p_2 \rightarrow L^p\Omega^0(\SEnd \cE)$ as 

\[
m_{\sigma,\tau}(H) = \sigma_v\sqrt{-1} F_H + [\phi,\phi^{*H}] + [x,x^{*H}] - [y^{*H},y] -\tau\cdot \text{id}
\]

\noindent It is clear that if a Hermitian metric $H$ is a zero of $m_{1,\tau}(H)$ then the Nakajima representation $(A_H,\phi,x,y)$ lies in $\mu_\bbR^{-1}(\tau)$. Furthermore, it will be shown in the next section that the metrics which minimize $\cM_{\tau}$ are zeroes of $m_{1,\tau}$. This provides the connection between our Lagrangian and the moment map equations. In order to prove this we will require the following definition.

For a fixed real number $B\geq 0$, define the space 
\[
\Met_B(L^p_2(S^0))=\{ H\in \Met(L^p_2(S^0)) : \norm{m_{\sigma,\tau(H)}}^p_{L^p,H} \leq B\}.
\]

\subsubsection{Lagrangian minimizers solve moment map}\label{subsection:Result}

\begin{prp}\label{prp:Simplesolves}
If $(\cE,\phi,x,y)$ is simple and $H\in \Met_B(L^p_2(S^0))$ minimizes $\cM_{\tau}$, then $$m_{\sigma,\tau}(H)=0.$$
\end{prp}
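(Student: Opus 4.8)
The plan is to realize $m_{\sigma,\tau}(H)$ as the $L^2$-gradient of $\cM_\tau$ and then exploit minimality; the one genuine subtlety is that the minimization runs over the constrained set $\Met_B(L^p_2(S^0))$ rather than over all metrics, so I must control how $\norm{m_{\sigma,\tau}}_{L^p}$ changes under variation. First I would compute the first variation along a path $H_t = He^{ts}$ with $s\in S^0$. The Donaldson term $M_D$ contributes $(\sqrt{-1}F_H,s)_{L^2}$ by the classical computation of Donaldson and Simpson (the second-order $\Psi$-term combines with the linear term to reproduce the curvature of $H$), while the preceding Lemma, which rewrites $\norm{\phi}^2_{L^2,H}$, $\norm{x}^2_{L^2,H}$ and $\norm{y}^2_{L^2,H}$ through the operator $\psi(s)$ attached to $\psi(p,q)=e^{p-q}$, turns their $t$-derivatives into the bracket terms $([\phi,\phi^{*_H}],s)_{L^2}$, $([x,x^{*_H}],s)_{L^2}$ and $-([y^{*_H},y],s)_{L^2}$. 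Collecting these yields
\[
\frac{d}{dt}\Big|_{t=0}\cM_\tau(He^{ts}) = (m_{\sigma,\tau}(H),s)_{L^2},
\]
and I would also recall that $\cM_\tau$ is convex along these geodesic paths $t\mapsto He^{ts}$, so vanishing of this gradient is both necessary and sufficient for a minimum.

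Next I record two structural facts. Because the constraint keeps $s$ trace-free, every metric in $\Met_B(L^p_2(S^0))$ has $\det H=\det K$, so $\tr m_{\sigma,\tau}(H)$ is a constant determined by the normalization $\sqrt{-1}F_{\det K_v}=\deg(\cE_v)$ and by $\tau$; the compatibility of $\tau$ with the degrees (the same normalization under which $\cE$ is $\tau$-polystable) forces this constant to vanish, so $m:=m_{\sigma,\tau}(H)$ is automatically trace-free and $-m\in S^0$. Assuming $m\ne 0$, I then flow along $H_t = He^{-tm}$; the first-variation formula gives $\frac{d}{dt}\big|_{0}\cM_\tau(H_t) = -\norm{m}^2_{L^2}<0$, so $\cM_\tau$ strictly decreases. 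To contradict minimality it remains to verify that $H_t$ stays inside $\Met_B$, i.e. that $\norm{m_{\sigma,\tau}(H_t)}_{L^p}$ does not increase for small $t>0$.

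This last estimate is the main obstacle. Along the flow $m_t=m_{\sigma,\tau}(H_t)$ satisfies a parabolic equation of the schematic form $\partial_t m_t = -\Delta_{H_t}m_t + (\text{zeroth order in }\phi,x,y)$, and a Kato-inequality / maximum-principle argument should bound $\partial_t\abs{m_t}$ by $\Delta\abs{m_t}$ and thereby keep $\norm{m_t}_{L^p}$ non-increasing. The delicate point, and the genuinely new feature relative to \cite{HKcorrespondence}, is that $y$ is an $H^1$-class represented only on the overlaps of coordinate charts, so both the first-variation computation of the $y$-term and the sign control of its contribution to the zeroth-order part of the evolution must be carried out chartwise and checked to patch consistently. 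Granting the estimate, $H_t\in\Met_B$ while $\cM_\tau(H_t)<\cM_\tau(H)$, contradicting minimality; hence $m=0$.

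Finally I would settle regularity. The minimizer lies a priori only in $L^p_2$, so once the weak identity $m_{\sigma,\tau}(H)=0$ holds I bootstrap smoothness of $H$ by elliptic estimates exactly as for Hermitian–Einstein metrics, using the smoothness of $\psi(s)$ and $\Psi(s)$ supplied by the quoted Lemma of \cite{HKcorrespondence}. Simplicity enters as in \cite[5.7.12]{Kobayashi87}: it guarantees that the linearization of $m_{\sigma,\tau}$ has only the scalars in its kernel, so $\cM_\tau$ is strictly convex transverse to the scalar directions and the minimizer is non-degenerate, which is what makes both the descent argument and the uniqueness of the resulting solution robust.
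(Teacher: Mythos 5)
Your overall strategy---compute the first variation, identify $(m_{\sigma,\tau}(H),s)_{L^2}$ as the gradient, and descend from a hypothetical minimizer with $m_{\sigma,\tau}(H)\neq 0$---is the right shape, and your first-variation formula matches the paper's lemma. But your proof has a genuine gap at exactly the step you yourself flag as ``the main obstacle,'' and it is not a removable technicality: it is the crux of the proposition. You descend along $s=-m$ (the naive gradient direction), and then you must show the path $H_t=He^{-tm}$ stays inside the constraint set $\Met_B$, i.e.\ that $\norm{m_{\sigma,\tau}(H_t)}_{L^p,H_t}$ does not increase. You only gesture at a Kato-inequality/maximum-principle argument (``should bound \dots granting the estimate''), and even if that computation were carried out, it would at best give a \emph{non-strict} inequality $\frac{d}{dt}\norm{m_t}^p_{L^p}\big\vert_{t=0}\leq 0$; if the minimizer sits on the boundary $\norm{m_{\sigma,\tau}(H)}^p_{L^p,H}=B$, non-strict monotonicity at $t=0$ does not keep $H_t$ in $\Met_B$ for small $t>0$, so the contradiction with minimality does not follow. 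The paper avoids this entirely by a different choice of descent direction: it first proves $L_H$ restricted to $L^p_2S^0(H)$ is Fredholm of index zero with trivial kernel (this is where simplicity enters, via $(L_H(s),s)=0\Rightarrow [s,\phi]=[s,x]=[s,y]=0\Rightarrow s$ scalar $\Rightarrow s=0$), hence surjective, and then solves $L_H(s)=-m_{\sigma,\tau}(H)$. Since $L_H(s)$ \emph{is} the first variation of $m_{\sigma,\tau}$ along $H_\epsilon = He^{\epsilon s}$, this choice makes the constraint functional strictly decrease: $\frac{d}{d\epsilon}\norm{m_{\sigma,\tau}(H_\epsilon)}^p_{L^p,H_\epsilon}\big\vert_{\epsilon=0}=-p\norm{m_{\sigma,\tau}(H)}^p_{L^p,H}<0$, a one-line verification that the path enters $\Met_B$. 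Minimality then forces $(m_{\sigma,\tau}(H),s)_{L^2}=-(L_H(s),s)_{L^2}=0$, and simplicity again gives $s=0$, contradicting $L_H(s)=-m_{\sigma,\tau}(H)\neq 0$.

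A symptom of the gap is where simplicity appears in your write-up: you invoke it only for regularity and uniqueness at the end, whereas in the paper's argument simplicity is load-bearing twice (kernel of $L_H$ is zero, hence surjectivity onto descent directions; and $(L_H(s),s)=0\Rightarrow s=0$ to close the contradiction). An argument for this proposition that never uses simplicity in its main thread cannot be complete. A secondary soft spot: your trace-freeness of $m_{\sigma,\tau}(H)$ (needed so that $-m$ is an admissible variation in $S^0$) rests on a compatibility between $\tau$ and the degrees that is not among the hypotheses; the paper's route never needs $-m$ itself to be the variation direction, only that it lies in the target of the surjective operator $L_H$.
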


Together with \Cref{prp: stable implies simple}, \Cref{prp:Simplesolves} shows that a stable $\overline{Q}$-bundle admits a hermitian metric solving \cref{eq:1}. The proof of \Cref{prp:Simplesolves} will rely on certain bounds for selfadjoint sections $s$.

In order to complete the proof some results involving $\cM_{\tau}$ are necessary. Let $L_H:L^p_2S(H)\rightarrow L^pS(H)$ be given by

\[
L_H(s) = \frac{d}{d\epsilon} m_{\sigma,\tau}(He^{\epsilon s})\lvert_{\epsilon=0}.
\]

A straightforward computation shows that

\[
L_H(s) = \sqrt{-1}\bar{\partial}_\cE\partial_H s + [\phi,[s,\phi]^{*H}] + [x,[s,x]^{*H}] + [[s,y]^{*H},y].
\]

\begin{lem}
	\begin{enumerate}
		\item $\frac{d}{d\epsilon}\cM_{\sigma,\tau}(H^{\epsilon s})\lvert_{\epsilon=0} = (m_{\sigma,\tau}(H),s)_{L^2,H}$,
		\item $\frac{d^2}{d\epsilon^2}\cM_{\sigma,\tau}(He^{\epsilon s})\lvert_{\epsilon=0}=(L_H(s),s)_{L^2,H} =\frac{d^2}{d\epsilon^2}M_D(H) +\norm{[s,\phi]}^2_{L^2,H}+\norm{[s,x]}^2_{L^2,H} + \norm{[s,y]}^2_{L^2,H}$.
	\end{enumerate}
\end{lem}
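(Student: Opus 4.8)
The plan is to differentiate $\cM_{\sigma,\tau}$ directly along the ray $\epsilon\mapsto He^{\epsilon s}$, exploiting that the deformation direction $s$ commutes with the displacement $e^{\epsilon s}$, so that $s$ stays self-adjoint with respect to every $H_\epsilon=He^{\epsilon s}$; this is what lets both variations reduce to elementary trace identities and avoids any Baker--Campbell--Hausdorff corrections. The two standing inputs are that $s$ is $H$-self-adjoint and that the Donaldson Lagrangian satisfies the additivity (cocycle) property of \cite{SimpsonVariationHodge}, which permits recomputing a first variation with $H_\epsilon$ itself as the reference metric. I will also use the identity $L_H(s)=\frac{d}{d\epsilon}m_{\sigma,\tau}(He^{\epsilon s})\lvert_{\epsilon=0}$ recorded just before the lemma.

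For part (1) I would prove the stronger statement that, for any $H$-self-adjoint direction $u$, $\frac{d}{d\epsilon}\cM_{\sigma,\tau}(He^{\epsilon u})\lvert_{\epsilon=0}=(m_{\sigma,\tau}(H),u)_{L^2,H}$, and then specialize to $u=s$. The four groups of terms are treated separately. By additivity together with the defining first-variation property of $M_D$, the Donaldson term contributes $(\sigma\sqrt{-1}F_H,u)_{L^2}$. For the $\phi$-term, differentiating $\phi^{*_{H_\epsilon}}=e^{-\epsilon u}\phi^{*_H}e^{\epsilon u}$ gives $\frac{d}{d\epsilon}\phi^{*_{H_\epsilon}}\lvert_{0}=[\phi^{*_H},u]$, and cyclicity of the trace turns $\int_X\tr(\phi[\phi^{*_H},u])$ into $([\phi,\phi^{*_H}],u)_{L^2,H}$; the $x$-term is identical, and the $y$-term is the same computation after exchanging the head/tail roles for the reversed edge, producing $-([y^{*_H},y],u)_{L^2,H}$. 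Finally $\frac{d}{d\epsilon}\bigl(-(s_\epsilon,\tau\cdot\id)\bigr)\lvert_{0}=-(\tau\cdot\id,u)$, since $\det(e^{s}e^{\epsilon u})=\det e^{s}\det e^{\epsilon u}$ forces $\tr(s_\epsilon)=\tr(s)+\epsilon\,\tr(u)$ exactly. Summing the four contributions recovers $m_{\sigma,\tau}(H)$.

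For part (2) I would apply (1) iteratively. Setting $g(\epsilon)=\cM_{\sigma,\tau}(He^{\epsilon s})$ and invoking (1) at the base metric $H_\epsilon$ in direction $s$ gives $g'(\epsilon)=(m_{\sigma,\tau}(H_\epsilon),s)_{L^2,H_\epsilon}$; since $s$ remains $H_\epsilon$-self-adjoint and $m_{\sigma,\tau}(H_\epsilon)$ is $H_\epsilon$-self-adjoint, this equals $\int_X\tr(m_{\sigma,\tau}(H_\epsilon)\,s)$, an expression whose metric dependence now lives entirely inside $m_{\sigma,\tau}$. Differentiating once more and using $L_H(s)=\frac{d}{d\epsilon}m_{\sigma,\tau}(He^{\epsilon s})\lvert_0$ yields $g''(0)=\int_X\tr(L_H(s)\,s)=(L_H(s),s)_{L^2,H}$, which is the middle equality. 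For the last equality I would expand $\int_X\tr(L_H(s)s)$ term by term: the Donaldson piece $\int_X\tr((\sqrt{-1}\overline{\partial}_\cE\partial_H s)s)$ is precisely the second variation $\frac{d^2}{d\epsilon^2}M_D(He^{\epsilon s})\lvert_0$ (the same reasoning applied to $M_D$ alone), while for each of $\phi,x,y$ cyclicity collapses, e.g., $\int_X\tr([\phi,[s,\phi]^{*_H}]s)$ to $\int_X\tr([s,\phi]^{*_H}[s,\phi])=\norm{[s,\phi]}^2_{L^2,H}$, and likewise for $x$ and $y$.

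The step I expect to be the genuine obstacle is the careful handling of the $y$-sections. Unlike $\phi$ and $x$, each $y_a$ represents a class in $H^1(X,\SHom(\cE_{h(a)},\cE_{t(a)})\otimes K)$ rather than a global section, so the products $xy$, $[y^{*_H},y]$, and the norms $\norm{y}^2_{L^2,H}$ must be interpreted through \v{C}ech/Dolbeault representatives on two-fold overlaps, exactly as flagged after \cref{eq:conjecturedmaps2}. Verifying that differentiation commutes with these overlap restrictions, that the adjoint rule $y_a^{*_H}=e^{-s_{h(a)}}y_a^{*_K}e^{s_{t(a)}}$ is compatible with the cohomological pairing, and that the reversed head/tail bookkeeping yields the sign $+\norm{[s,y]}^2_{L^2,H}$ (equivalently the $-[y^{*_H},y]$ in $m_{\sigma,\tau}$) is the one place where the doubled-quiver setting genuinely departs from \cite{HKcorrespondence} and \cite{SimpsonVariationHodge}; once this is secured, the remaining computations are formally identical to the Higgs- and quiver-bundle cases.
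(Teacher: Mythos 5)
Your proposal is correct and follows essentially the same route as the paper: the paper's proof computes only the new $y$-term, obtaining $\frac{d}{d\epsilon}\abs{y}^2_{H_\epsilon}\lvert_{\epsilon=0}=-([y^{*_H},y],s)$ exactly as you do, and delegates the $M_D$, $\phi$, $x$, and $\tau$ contributions in (1) as well as the final equality in (2) to \cite[Lemma 3.3]{HKcorrespondence}, while deriving the middle equality in (2) from (1) and the definition of $L_H$ just as you propose. The head/tail sign bookkeeping for the reversed-edge sections $y$ that you single out as the genuine obstacle is precisely the one computation the paper verifies by hand; your write-up simply makes the cited steps self-contained.
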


\begin{proof}
In order to prove $(1)$, it is sufficient to show that $\frac{d}{d\epsilon}\abs{y}^2_{H_\epsilon}\lvert_{\epsilon=0}= -([y^{*H},y]s)$. Equality for the remaining terms is given by \cite[lem 3.3]{HKcorrespondence}. 

\begin{align*}
\frac{d}{d\epsilon} \abs{y}^2_{H_\epsilon}\lvert_{\epsilon=0}&= \frac{d}{d\epsilon}\text{tr}(y\circ y^{*H_\epsilon}) =\text{tr}(y\circ \frac{d}{d\epsilon}y^{*H_\epsilon}\lvert_{\epsilon=0}) \\
											 &=\text{tr}(y[s,y]^{*H}) \\
											 &= -\left([y^{*H},y]s\right).
\end{align*}

\noindent This proves $(1)$. The first equality in $(2)$ comes from $(1)$ and the definition of $L_H$. \cite[lem 3.3]{HKcorrespondence} gives the second equality.
\end{proof} 

We are now in a position to prove \Cref{prp:Simplesolves}.

\begin{proof}

We start by showing that, when restricted to $L^P_2S^0(H)$, $L_H$ is a Fredholm operator of index zero and kernel zero. That $L_H$ is a Fredholm operator of index zero is contained in the proof of \cite[Prop. 3.1]{HKcorrespondence}. To see that the kernel of $L_H$ must be $0$, assume $L_H(s)=0$. Then $(s,L_H(s))_{L^2,H}=0$ and by the previous lemma $[s,\phi]=[s,x]=[s,y]=0$ on coordinate overlaps. By the gluing conditions $[s,\phi]=[s,x]=0$ everywhere, so $s$ is an endomorphism of the simple $\overline{Q}$-bundle $(\cE,\phi,x,y)$, hence $s$ is a scalar multiple of the identity. Moreover $s$ being trace-free implies $s_v=0$ for all $v$. Therefore $L_H$ is surjective when restricted to $L^p_2S^0(H)$.

Now, suppose $H$ minimizes $\cM_{\tau}$. Assume that $m_{\sigma,\tau}(H) \neq 0$. Since $L_H$ is surjective there exists a section $s\in L^p_2S^0(H)$, non-zero such that $L_H(s)=-m_{\sigma,\tau}(H)$. Let $H_\epsilon = He^{\epsilon s}$ be a path in $\Met(L^p_2(S^0))$. For $p=2n$

\begin{align*}
\frac{d}{d\epsilon} \norm{m_{\sigma,\tau}(H)}^p_{L^2,H_\epsilon}\lvert_{\epsilon=0} &= n \int_X \frac{d}{d\epsilon} \abs{m_{\sigma,\tau}(H_\epsilon)}^2_{H_\epsilon}\abs{m_{\sigma,\tau}(H_\epsilon)}^{2n-2}_{H_\epsilon}\lvert_{\epsilon=0} \\
														&=(-2n)\int_X \abs{m_{\sigma,\tau}(H)}^{2n}_H = -p\norm{m_{\sigma,\tau}(H)}^p_{L^p,H} <0.
\end{align*}

\noindent Thus for small $\epsilon$, $H_\epsilon\in \Met_B(L^p_2(S))$ and since $H$ minizimes $\cM_{\sigma,\tau}$, 

\[
\frac{d}{d\epsilon} \cM_\tau(H_\epsilon)\lvert_{\epsilon=0} = 0.
\]

 By previous computations it follows that $(L_H(s),s)_{L^2,H} =0$. However, if $(E,\phi,x,y)$ is simple and $(L_H(s),s)=0$ then $s=0$, contradicting the assumption that $-m_{\sigma,\tau}(H)=-L_H(s)\neq 0$. 
\end{proof}

It has now been shown that if $(\cE,\phi,x,y)$ is a simple $\overline{Q}$-bundle, and $H$ is a metric on $\cE$ which minimizes the Lagrangian $\cM_{\sigma,\tau}$, then the representation $(A_H,\phi,x,y)$ satisfies \cref{eq:1}. The proof of \Cref{thm:1} will be complete once the existence of minimizing metrics is shown.

\subsubsection{Existence of minimizing metrics}\label{subsubsection:Existenceofminimizingmetrics}

\begin{prp}\label{prp:estimateimpliesmetric}
If $(\cE,\phi,x,y)$ is simple and there exist constants $C_1(B),C_2(B)>0$ such that for all $s\in L^p_2(S)$ and $H\in \Met_B(L^p_2(S^0))$, we have

\[
\sup \abs{s} \leq C_1 \cM_{\tau}(H) +C_2,
\]

\noindent then there is a projectively unique hermitian metric satisfying $m_{\sigma,\tau}=0$.
\end{prp}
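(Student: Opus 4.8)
The plan is to run the direct method of the calculus of variations on the constrained space $\Met_B(L^p_2(S^0))$, following the template of Simpson for Higgs bundles \cite{SimpsonVariationHodge} and its quiver version \cite{HKcorrespondence}, with the hypothesized $C^0$ estimate supplying the coercivity that makes the argument close. Note first that the estimate $\sup\abs{s}\le C_1\cM_\tau(H)+C_2$ already forces $\cM_\tau(H)\ge -C_2/C_1$ on $\Met_B$, so $\cM_\tau$ is bounded below and the infimum is finite. First I would fix a minimizing sequence $H_i=Ke^{s_i}$ in $\Met_B(L^p_2(S^0))$ with $\cM_\tau(H_i)\to \inf_{\Met_B}\cM_\tau$. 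Since $\cM_\tau(H_i)$ is bounded, the hypothesis immediately yields a uniform $L^\infty$ bound $\sup\abs{s_i}\le C$.

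Next I would promote this to a uniform $L^p_2$ bound. Two ingredients combine: the $L^\infty$ bound just obtained, and the defining constraint $\norm{m_{\sigma,\tau}(H_i)}_{L^p,H_i}\le B$ of the set $\Met_B$. Because $m_{\sigma,\tau}(H)$ contains the curvature term $\sqrt{-1}F_H=\sqrt{-1}F_K+\sqrt{-1}\overline{\partial}_\cE(e^{-s}\partial_K(e^{s}))$, which is second order in $s$, the constraint together with the $C^0$ control of the nonlinear lower-order terms yields, by elliptic estimates for $\overline{\partial}_\cE\partial_K$ on the compact surface $X$, a uniform bound $\norm{s_i}_{L^p_2}\le C(B)$. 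Passing to a subsequence, $s_i\rightharpoonup s_\infty$ weakly in $L^p_2$ and, by Rellich--Kondrachov (using $2-2/p>0$ for $p>1$), strongly in $L^p_1$ and in $C^0$.

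The crux is to show that $H_\infty=Ke^{s_\infty}$ attains the infimum and stays admissible. The linear term and the $\phi,x,y$ terms of $\cM_\tau$ converge along the subsequence by the strong $C^0$/$L^p_1$ convergence together with the continuity of the operators $\psi,\Psi$ recorded in the technical lemma of \cite{HKcorrespondence}; the principal term $(\Psi(s)\overline{\partial}_\cE s,\overline{\partial}_\cE s)_{L^2}$ is convex in $\overline{\partial}_\cE s$ and hence weakly lower semicontinuous. This gives $\cM_\tau(H_\infty)\le \liminf_i \cM_\tau(H_i)=\inf_{\Met_B}\cM_\tau$, so $H_\infty$ minimizes, while admissibility passes to the limit since the $L^p$ norm is weakly lower semicontinuous, giving $H_\infty\in\Met_B$. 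As $(\cE,\phi,x,y)$ is simple, \Cref{prp:Simplesolves} then yields $m_{\sigma,\tau}(H_\infty)=0$, and since this equation is elliptic in $s_\infty$, bootstrapping upgrades $s_\infty$ to a smooth solution. I expect the weak lower semicontinuity of the nonlinear gradient term, and the simultaneous passage of the Lagrangian and the constraint $\norm{m_{\sigma,\tau}}_{L^p}\le B$ to the weak limit, to be the main technical obstacle.

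Finally, projective uniqueness follows from convexity. If $H$ and $H'=He^{s}$ both solve $m_{\sigma,\tau}=0$, I would consider the path $H_\epsilon=He^{\epsilon s}$ and invoke the first- and second-variation formulas, whereby $\tfrac{d}{d\epsilon}\cM_\tau(H_\epsilon)=(m_{\sigma,\tau}(H_\epsilon),s)_{L^2}$ vanishes at both $\epsilon=0$ and $\epsilon=1$, while
\[
\frac{d^2}{d\epsilon^2}\cM_\tau(H_\epsilon)=(L_H(s),s)_{L^2,H}=\frac{d^2}{d\epsilon^2}M_D(H)+\norm{[s,\phi]}^2_{L^2,H}+\norm{[s,x]}^2_{L^2,H}+\norm{[s,y]}^2_{L^2,H}\ge 0.
\]
Thus $\epsilon\mapsto\cM_\tau(H_\epsilon)$ is convex on $[0,1]$ with vanishing derivatives at both endpoints, forcing it to be constant and hence every nonnegative summand above to vanish: $\overline{\partial}_\cE s=0$ and $[s,\phi]=[s,x]=[s,y]=0$. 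Therefore $s$ is a holomorphic endomorphism of the $\overline{Q}$-bundle, and simplicity forces $s=\lambda\cdot\id$, so $H'=e^{\lambda}H$ differs from $H$ by an overall positive scalar, which is exactly projective uniqueness.
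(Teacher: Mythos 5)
Your proposal is correct and is essentially the argument the paper itself invokes: the paper's proof is a one-line citation to \cite[\S 3.14]{BradlowSpecialMetrics}, whose method is exactly your direct-method scheme — coercivity from the hypothesized $C^0$ estimate, elliptic estimates and weak compactness to extract a minimizer in $\Met_B(L^p_2(S^0))$, \Cref{prp:Simplesolves} to conclude $m_{\sigma,\tau}=0$, and convexity of $\cM_\tau$ along the paths $He^{\epsilon s}$ plus simplicity for projective uniqueness. You have merely written out in full the Bradlow/Simpson argument that the paper defers to by reference.
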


\begin{proof}
The proof of \cite[\S 3.14]{BradlowSpecialMetrics} extends to the current work.
\end{proof}

\begin{prp}\label{prp:bounds}
There exist constants $C_1,C_2>0$ such that $\sup\abs{s} \leq C_1\norm{s}_{L^1} + C_2$ for all $s\in L^p_2(S)$ and $H\in \Met_B(L^p_2(S^0))$.
\end{prp}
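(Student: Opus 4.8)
The plan is to follow the strategy used for Higgs bundles in \cite{SimpsonVariationHodge} and for quiver bundles in \cite{HKcorrespondence}, adapting the estimates of \cite[\S 3]{BradlowSpecialMetrics} to the present setting in which the sections $y$ are only defined on coordinate overlaps. The estimate is a pointwise sup-bound controlled by the $L^1$-norm, and its purpose is to supply the hypothesis of \Cref{prp:estimateimpliesmetric}. The analytic heart of the argument is a Bochner/Weitzenb\"ock-type differential inequality for the pointwise norm $\abs{s}$, after which a maximum-principle lemma of Donaldson converts it into the desired sup-$L^1$ bound.

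First I would establish, in the weak (distributional) sense, an inequality of the form
\[
\Delta \abs{s} \leq C\bigl( \abs{m_{\sigma,\tau}(H)} + \abs{m_{\sigma,\tau}(K)} \bigr),
\]
where $\Delta$ denotes the Laplacian on functions (with the sign convention making it non-negative) and $\abs{\cdot}$ is the pointwise operator norm. The derivation begins from the relation $F_{H_v} = F_{K_v} + \overline{\partial}_{\cE_v}(e^{-s_v}\partial_{K_v}(e^{s_v}))$ recorded above, together with the second-variation formula for $\cM_{\sigma,\tau}$, which shows that the contributions of $\phi$, $x$, and $y$ to $L_H$ enter through the non-negative terms $\norm{[s,\phi]}^2_{L^2,H}$, $\norm{[s,x]}^2_{L^2,H}$, and $\norm{[s,y]}^2_{L^2,H}$. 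Because these interaction terms carry the favourable sign, they may be discarded when passing to an upper bound for $\Delta\abs{s}$, leaving only the curvature and moment-map data on the right. For the commutators $[x,x^{*H}]$ and $[\phi,\phi^{*H}]$ this computation is pointwise and follows \cite[Lem.~3.2]{HKcorrespondence}; the one genuinely new term is $[y^{*H},y]$, where $y$ is a \v{C}ech representative defined on the overlaps $U_\alpha\cap U_\beta$. Here I would carry out the estimate locally on each overlap, where $y$ restricts to an honest endomorphism-valued form, and verify that the gluing conditions are compatible with the pointwise bound, so that the global inequality holds.

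With the differential inequality in hand, the constraint $H\in\Met_B(L^p_2(S^0))$ gives $\norm{m_{\sigma,\tau}(H)}^p_{L^p,H}\leq B$, while $m_{\sigma,\tau}(K)$ is a fixed smooth section and hence bounded; thus the right-hand side $g := C(\abs{m_{\sigma,\tau}(H)}+\abs{m_{\sigma,\tau}(K)})$ satisfies a uniform estimate $\norm{g}_{L^p}\leq C'(B)$. I would then invoke the standard elliptic lemma (due to Donaldson, cf. \cite{DonaldsonStable} and \cite[\S 3]{BradlowSpecialMetrics}): for a non-negative function $f$ with $\Delta f \leq g$ and $g\in L^p$, $p>1$, on the compact surface $X$, one has $\sup f \leq C_1(\norm{f}_{L^1}+\norm{g}_{L^p})$. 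This follows by solving $\Delta u = g$ so that $u\in L^p_2\hookrightarrow C^0$ with $\sup\abs{u}\leq C\norm{g}_{L^p}$, and then noting that $f-u$ is subharmonic, whence its supremum is controlled by its $L^1$-norm. Applying this with $f=\abs{s}$ and absorbing the uniformly bounded quantity $\norm{g}_{L^p}$ into the additive constant yields $\sup\abs{s}\leq C_1\norm{s}_{L^1}+C_2$, with constants depending only on $B$, on $X$, and on the fixed data $(K,\phi,x,y)$.

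The main obstacle I anticipate is the first step: establishing the Weitzenb\"ock inequality when the sections $y$ live in $H^1(X,\SHom(\cE_{t(a)},\cE_{h(a)})\otimes K)$ rather than as global sections. Unlike the $\phi$ and $x$ terms, which carry over essentially verbatim from \cite{HKcorrespondence} and \cite{SimpsonVariationHodge}, the term $[y^{*H},y]$ must be interpreted through \v{C}ech representatives on overlaps, and one must check that the pointwise inequality is unaffected by the choice of representative and by the patching. Once the inequality is secured, the remaining steps are routine applications of elliptic regularity and the maximum principle, and the uniformity of the constants over $\Met_B$ is immediate from the defining $L^p$-bound.
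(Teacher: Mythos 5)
Your proposal follows essentially the same route as the paper: the same Weitzenb\"ock-type inequality bounding $\Delta\abs{s}$ by $\abs{m_{\sigma,\tau}(H)}+\abs{m_{\sigma,\tau}(K)}$, obtained by discarding the matter-field commutator terms whose signs are favourable (including the reversed sign for $y$, which is exactly the content of \Cref{lem: inequalities}), followed by the uniform $L^p$ bound from $\Met_B$ and Donaldson's maximum-principle lemma (\Cref{lem:DonaldsonTwisted}). The only cosmetic difference is that the paper runs the argument for $f=\abs{\sigma^{1/2}\cdot s}$ and converts to $\abs{s}$ at the end via $\abs{s}\leq \sigma_{\text{min}}^{-1/2}f$, whereas you absorb the weights $\sigma_v$ into the constants from the start.
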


By the previous two propositions, to prove \Cref{prp:Simplesolves} it is sufficient to prove that there exist constants $C_1,C_2$, depending on the real number $B$ such that $\norm{s}_{L^1} \leq C_1\cM_{\sigma,\tau} +C_2$. This requires a series of lemmas. The first lemma, a result of Donaldson \cite{DonaldsonTwisted}, applies to the current setting without change.

\begin{lem}\label{lem:DonaldsonTwisted} 
    There exists a smooth function $a:\bbR_{\geq 0} \rightarrow \bbR_{\geq 0}$ satisfying the following: $a(0)=0$, for $x\geq 1$, $a(x)=x$. For any $M\in \bbR$ there is a constant $C_M$ such that if $f$ is a positive bounded function on $X$ and $\Delta f\leq b$, where $b$ is a function in $L^p(X)$ for $p>n$ with $\norm{b}_{L^p} \leq M$, then $\sup \abs{f} \leq C_Ma(\norm{f}_{L^1})$. Moreover, if $\Delta f\leq 0$ then $\Delta f=0$.
\end{lem}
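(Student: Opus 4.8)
The statement is a standard a priori estimate for subsolutions of the Laplacian on the closed Riemannian manifold $X$, and since (as the authors note) it is quoted verbatim from \cite{DonaldsonTwisted}, my plan is to reconstruct Donaldson's argument, treating the two assertions separately.

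\emph{The maximum principle.} This is the easy part. Because $X$ is closed, $\int_X \Delta f\,dV=0$ for every $C^2$ function $f$ (integrate $\Delta=d^*d$ against the constant function $1$, or apply the divergence theorem). If $\Delta f\le 0$ pointwise, then a nonpositive function with vanishing integral must vanish, so $\Delta f\equiv 0$. This disposes of the last sentence of the lemma.

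\emph{The sup estimate in the large-norm regime.} Let $G(x,y)$ be the Green's function of $\Delta$, normalized so that $\int_X G(x,\cdot)\,dV=0$ and $\Delta_y G(x,y)=\delta_x-\Vol(X)^{-1}$; this yields the representation $f(x)=\bar f+\int_X G(x,y)\,\Delta f(y)\,dV(y)$, where $\bar f=\Vol(X)^{-1}\norm{f}_{L^1}$ since $f\ge 0$. I would use two classical facts about $G$ on a compact manifold: it is bounded below, $G\ge -c_0$, and $G(x,\cdot)\in L^{p'}$ uniformly in $x$, where the conjugate exponent $p'=p/(p-1)$ is subcritical for the $d(x,y)^{2-n}$ kernel singularity precisely because $p>n$ (for $n=2$ the singularity is logarithmic and lies in every $L^{p'}$). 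Writing $\Delta f=b-(b-\Delta f)$ with $b-\Delta f\ge 0$, H\"older gives $\int_X G\,b\le \norm{G(x,\cdot)}_{L^{p'}}\norm{b}_{L^p}\le C M$, while $G\ge -c_0$ together with $\int_X\Delta f=0$ gives $-\int_X G\,(b-\Delta f)\le c_0\int_X(b-\Delta f)=c_0\int_X b\le c_0 C M$. Hence $\sup f\le \Vol(X)^{-1}\norm{f}_{L^1}+C_M$, which is already the assertion of the lemma when $\norm{f}_{L^1}\ge 1$, where $a$ is the identity.

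\emph{The function $a$ and the main obstacle.} The remaining, genuinely nonlinear, point is the small-norm regime, where the additive constant above is too crude: a tall, thin spike of height $h$ and width $\epsilon$ has $\norm{f}_{L^1}\sim h\epsilon^{n}$ but forces $\norm{(\Delta f)_+}_{L^p}\sim h\,\epsilon^{-2+n/p}$, so the constraint $\norm{b}_{L^p}\le M$ yields $\sup f\lesssim C_M\norm{f}_{L^1}^{\theta}$ with $\theta=(2-n/p)/(n+2-n/p)\in(0,1)$ rather than a linear bound. To capture this I would run a Moser/De Giorgi iteration on the sublevel sets of $f$, using $\Delta f\le b$ with $b\in L^p$, $p>n$, to produce a reverse-H\"older inequality and iterate it up to $L^\infty$; carefully tracking the scaling gives a concave profile which one truncates to define $a$ with $a(0)=0$ and $a(x)=x$ for $x\ge 1$. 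I expect this iteration---extracting the correct sublinear power behaviour near $0$, and hence the very existence and shape of $a$---to be the main obstacle, whereas the maximum principle and the large-norm Green's-function bound are routine.
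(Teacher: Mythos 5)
Your attempt is not being measured against any argument in the paper: the paper offers no proof of this lemma at all, importing it from Donaldson \cite{DonaldsonTwisted} with the one-line remark that it ``applies to the current setting without change.'' So the relevant comparison is with the source's argument, and judged as a standalone proof your proposal is correct in its two easy parts but incomplete at exactly the point you flag. The maximum principle step is fine: on a closed manifold $\int_X \Delta f\,dV = 0$, so a pointwise sign on $\Delta f$ forces $\Delta f \equiv 0$. The Green's function step is also sound (with the positive Laplacian convention, so that $G \ge -c_0$ and the representation $f(x) = \bar{f} + \int_X G(x,y)\,\Delta f(y)\,dV(y)$ hold): splitting $\Delta f = b - (b - \Delta f)$ with $b - \Delta f \ge 0$, using H\"older on the first piece and $G \ge -c_0$ plus $\int_X \Delta f = 0$ on the second, gives $\sup f \le \Vol(X)^{-1}\norm{f}_{L^1} + C_M$, which yields the lemma in the regime $\norm{f}_{L^1} \ge 1$ where $a$ is the identity.

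The genuine gap is that the content which makes the lemma nontrivial --- the existence of $a$ with $a(0) = 0$, i.e.\ the statement that $\sup f \to 0$ as $\norm{f}_{L^1} \to 0$ for fixed $M$ --- is never proved: you give a correct heuristic (the spike computation, with the right exponent $\theta = (2-n/p)/(n+2-n/p)$) and announce a Moser/De Giorgi iteration, but the iteration is not carried out, and you yourself call it the main obstacle. As written, the proposal establishes the lemma only for $\norm{f}_{L^1} \ge 1$. The missing step can be closed without redoing any iteration from scratch: invoke the standard local maximum principle for subsolutions (Gilbarg--Trudinger, Thm.\ 8.17, or any mean-value inequality for $\Delta f \le b$), which after rescaling to a geodesic ball of radius $r \le r_0$ reads $f(x) \le C\left(r^{-n}\norm{f}_{L^1(B_r(x))} + r^{2-n/p}\norm{b}_{L^p(B_r(x))}\right)$; choosing $r = \left(\norm{f}_{L^1}/M\right)^{1/(n+2-n/p)}$ when this is at most $r_0$ (and $r = r_0$ otherwise) balances the two terms and gives $\sup f \le C\,M^{1-\theta}\norm{f}_{L^1}^{\theta}$ in the small-norm regime. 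Gluing this with your large-norm linear bound and smoothing the resulting concave profile produces the function $a$. Until some such argument is supplied, the central assertion of the lemma remains unproven in your write-up.
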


The following two lemmas require only minor modifications in order to account for the sections $y$ which are not defined globally.

\begin{lem}\label{lem: inequalities}
      Let $i\in \{x,\phi\}$, and $H=Ke^s\in \Met(L^p_2(S))$, for $s\in L^p_2S$. Then the following inequalities hold:

     \[
     ([i,i^{*K}],s) \leq ([i,i^{*H}],s), \quad ([y^{*K},y],s) \geq ([y,^{*H},y],s).
     \]
\end{lem}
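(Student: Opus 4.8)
The plan is to prove each of the two inequalities by reducing to a pointwise (fibrewise) statement and then integrating over $X$, following the convexity argument standard in this class of Hitchin–Kobayashi proofs (e.g. \cite{SimpsonVariationHodge, HKcorrespondence}). For the $i \in \{x,\phi\}$ inequality the statement $([i,i^{*K}],s) \le ([i,i^{*H}],s)$ carries over verbatim from \cite[lem 3.3]{HKcorrespondence}, so the only genuinely new content is the reversed inequality $([y^{*K},y],s) \ge ([y^{*H},y],s)$ for the sections $y$ associated to the reverse edges. I would state this reduction explicitly and then concentrate on the $y$-term.

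First I would recall that, by \Cref{defn:Lagrangian} and the surrounding setup, $H = Ke^s$ with $s$ self-adjoint, and that the $H$-adjoint is computed edgewise by $y_a^{*_H} = e^{-s_{h(a)}} y_a^{*} e^{s_{t(a)}}$ (note the head/tail indices are swapped relative to the $x$-case, since $y$ runs along the reversed edge). Diagonalizing $s$ fibrewise in an orthonormal $K$-basis, write $s_v = \sum_i \lambda_{v,i}\, u_{v,i}\otimes u^{v,i}$ exactly as in the $\Psi$-construction preceding the lemma. The plan is then to expand $\operatorname{tr}\!\big([y^{*_H},y]\,s\big)$ in these eigenbases: each matrix entry $(y_a)_{ij}$ gets weighted by a factor $e^{\lambda_{t(a),j}-\lambda_{h(a),i}}$ coming from the conjugation by $e^{\pm s}$, while the contraction with $s$ produces a factor of the form $(\lambda_{h(a),i}-\lambda_{t(a),j})$. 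Collecting terms, the pointwise quantity $\operatorname{tr}\big(([y^{*_H},y]-[y^{*_K},y])\,s\big)$ becomes a sum over edges and basis indices of $\abs{(y_a)_{ij}}^2$ times a factor of the shape $(\mu-\nu)\big(e^{\nu-\mu}-1\big)$, where $\mu,\nu$ are the relevant eigenvalues.

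The key analytic point is the elementary inequality $(\mu-\nu)\big(e^{\nu-\mu}-1\big) \le 0$, valid for all real $\mu,\nu$ (since $t(e^{-t}-1)\le 0$ for all $t$). This is precisely the \emph{opposite} sign to what appears for $x$ and $\phi$, and it is exactly the head/tail swap in $y_a^{*_H}$ that flips the sign of the exponent, hence reverses the inequality — this is the one place where the reverse-edge structure genuinely changes the argument and is, I expect, the main (though modest) obstacle: one must track the indices carefully enough to be sure the exponent's sign is reversed relative to the $x$-term and not merely relabelled. Summing the nonpositive pointwise contributions over $i,j$ and over edges $a$, integrating over $X$, and using that $\sigma_v, \operatorname{Vol}(X)>0$ then yields $\operatorname{tr}\big(([y^{*_H},y]-[y^{*_K},y])\,s\big)\le 0$ pointwise, i.e. $([y^{*_K},y],s) \ge ([y^{*_H},y],s)$, as claimed. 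I would remark that the $L^2$ pairings are well-defined here because the integrands are honest global densities despite $y$ itself living in $H^1$: the combinations $y^{*_H}y$ and $yy^{*_H}$ restricted to chart overlaps glue to a global $\operatorname{\SEnd}(\cE)$-valued form, as already noted in the discussion following \cref{eq:conjecturedmaps2}.
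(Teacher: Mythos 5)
Your proof is correct, but it takes a genuinely different route from the paper's. The paper (following \cite[Lem.~3.5]{HKcorrespondence}) argues by monotonicity along the path $H_\epsilon = Ke^{\epsilon s}$: setting $f_i(\epsilon) = ([i,i^{*H_\epsilon}],s)$ and $f_y(\epsilon) = ([y^{*H_\epsilon},y],s)$, one checks $f_i'(\epsilon) = \abs{[s,i]}^2_{H_\epsilon} \geq 0$ and $f_y'(\epsilon) = -\abs{[s,y]}^2_{H_\epsilon} \leq 0$, so $f_i$ is nondecreasing and $f_y$ nonincreasing, and evaluating at $\epsilon=0$ and $\epsilon=1$ gives both inequalities at once. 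You instead diagonalize $s$ fibrewise and reduce to the scalar inequalities $t(e^{t}-1)\geq 0$ and $t(e^{-t}-1)\leq 0$; your bookkeeping is right: for $x$ the $H$-pairing produces the weight $e^{\mu_i-\nu_j}(\mu_i-\nu_j)$ against $(\mu_i-\nu_j)$ for $K$, while for $y$ the adjoint $y_a^{*_H}=e^{-s_{h(a)}}y_a^{*}e^{s_{t(a)}}$ makes the exponential weight $e^{\nu_j-\mu_i}$ carry the opposite sign from the contraction factor $(\mu_i-\nu_j)$, which is exactly what reverses the inequality. The two arguments are essentially differential and integrated versions of one another --- carrying out the integration of $f'$ explicitly in eigencoordinates reproduces your scalar inequality --- but each buys something: yours is more elementary and establishes the stronger pointwise (a.e.) inequality before integrating over $X$, and it makes the sign reversal for the reverse-edge sections completely transparent; the paper's is coordinate-free, is a one-line extension of the quiver-vortex lemma it cites, and avoids any discussion of measurable fibrewise diagonalization of an $L^p_2$ section. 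Two minor remarks: your invocation of $\sigma_v$ and $\operatorname{Vol}(X)$ in the last step is unnecessary (neither enters the statement), and your observation that $y^{*_H}y$ and $yy^{*_H}$ glue across chart overlaps to give a well-defined global pairing is a point the paper itself glosses over, so it is a welcome addition rather than a defect.
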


\begin{proof} This is \cite[lem 3.5]{HKcorrespondence}, extended to double quivers. Define functions $f_i(\epsilon) = ([i,i^{*H_\epsilon}],s)$ and $f_y(\epsilon)=([y^{*H_\epsilon},y],s)$. The $f_i$ are increasing functions, given that $\frac{df_i(\epsilon)}{d\epsilon}= \abs{[s,i]}^2_{H_\epsilon}$ with $f_i(0)= ([i,i^{*K}],s)$ and $f_i(1) = ([i,i^{*H+\epsilon}],s)$, whereas $f_y$ is decreasing, as $f_y(0)=([y^{*K},y],s)$ and $f_y(1)=([y^{*H},y],s)$.
\end{proof}

\begin{lem}
    For $H=Ke^s$ as above and $\Delta$ the Laplacian,

    \[
    (m_{\sigma,\tau}(H) - m_{\sigma,\tau}(K),s) \geq \frac{1}{2}\abs{\sigma^{1/2}\cdot s}\Delta\abs{\sigma^{1/2}\cdot s}.
    \]
\end{lem}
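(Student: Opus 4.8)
The plan is to prove a Kato-type inequality by differentiating $m_{\sigma,\tau}$ along the geodesic $H_\epsilon = Ke^{\epsilon s}$ and comparing the resulting second-order expression to the Laplacian of $|\sigma^{1/2}\cdot s|$. The starting point is the fundamental identity that for the path $K_\epsilon = Ke^{\epsilon s}$ one has $\frac{d}{d\epsilon}m_{\sigma,\tau}(K_\epsilon)|_{\epsilon} = L_{K_\epsilon}(s)$, so that integrating from $\epsilon = 0$ to $1$ gives $m_{\sigma,\tau}(H) - m_{\sigma,\tau}(K) = \int_0^1 L_{K_\epsilon}(s)\,d\epsilon$. Pairing against $s$ and recalling the explicit form
\[
L_H(s) = \sqrt{-1}\,\bar\partial_\cE\partial_H s + [\phi,[s,\phi]^{*H}] + [x,[s,x]^{*H}] + [[s,y]^{*H},y],
\]
the commutator terms are each manifestly nonnegative when paired against $s$ (this is exactly the monotonicity recorded in \Cref{lem: inequalities}: the $\phi$ and $x$ terms increase while the $y$ term contributes with the sign that makes $([y^{*H},y]-[y^{*K},y],s)\geq 0$). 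Hence the entire contribution from the Higgs and edge data is bounded below by zero, and one is reduced to estimating the leading curvature term $(\sqrt{-1}\,\bar\partial_\cE\partial_H s, s)$ from below by $\tfrac12\,|\sigma^{1/2}\cdot s|\,\Delta|\sigma^{1/2}\cdot s|$.

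First I would reduce to the single-vertex statement, since $m_{\sigma,\tau}$, $s$, and the Laplacian all decompose as direct sums over $v\in V(Q)$, and absorb the positive weights $\sigma_v$ by rescaling; this is why the inequality is naturally phrased in terms of $\sigma^{1/2}\cdot s$. The remaining analytic core is the pointwise Kato inequality for the Hermitian endomorphism $s$: for a self-adjoint section one has
\[
\big(\sqrt{-1}\,\bar\partial_\cE\partial_H s,\, s\big) \;\geq\; \tfrac12\,|s|\,\Delta|s|,
\]
valid pointwise away from the zero locus of $s$ and globally in the distributional sense. This is the standard Bochner-type estimate used by Simpson and by Donaldson: one writes $\Delta|s|^2 = 2(\Delta s, s) - 2|\nabla s|^2$ in terms of the relevant $\bar\partial$-Laplacian, combines it with $|s|\,\Delta|s| = \tfrac12\Delta|s|^2 - |\nabla|s||^2$, and invokes $|\nabla|s||\leq |\nabla s|$ to discard the gradient terms with the correct sign. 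Diagonalizing $s$ pointwise in a $K$-orthonormal frame and using self-adjointness makes the cross terms manifest.

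The main obstacle I expect is handling the term $[[s,y]^{*H},y]$ carefully, because $y$ is not a globally defined section but a class in $H^1$, represented by \v{C}ech data on two-fold overlaps $U_\alpha\cap U_\beta$; every manipulation involving $y^{*H}$ and the product $[s,y]$ must be justified at the level of these local representatives and shown to glue. The paper has already signalled this subtlety in the preceding lemmas, and the clean resolution is that the pairing $([y^{*H},y],s)$ is computed by integrating local expressions whose sign is controlled by the monotonicity of $f_y(\epsilon) = ([y^{*H_\epsilon},y],s)$ established in \Cref{lem: inequalities}; the gluing conditions ensure the resulting integral is well-defined and contributes with the claimed nonnegative sign. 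Once the positivity of all three commutator terms is in hand and the Kato inequality is applied to the curvature term vertex-by-vertex, summing over $v$ with the weights $\sigma_v$ yields the stated inequality with the factor $\tfrac12$ intact.
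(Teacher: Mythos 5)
Your skeleton matches the paper's proof: both arguments first discard the $\phi$, $x$, $y$ contributions using the monotonicity of \Cref{lem: inequalities} (your integral representation $m_{\sigma,\tau}(H)-m_{\sigma,\tau}(K)=\int_0^1 L_{K_\epsilon}(s)\,d\epsilon$ is a repackaging of that lemma, since $\tfrac{d}{d\epsilon}f_i(\epsilon)=\abs{[s,i]}^2_{H_\epsilon}\geq 0$), reducing everything to the curvature term. The gap is in what that curvature term actually is. After the commutators are discarded, the quantity to be bounded below is
\[
\sqrt{-1}\Lambda\bigl(\sigma(F_H-F_K),s\bigr)
=\sqrt{-1}\Lambda\bigl(\sigma\,\overline{\partial}_\cE(e^{-s}\partial_K e^s),s\bigr)
=\int_0^1\sqrt{-1}\Lambda\bigl(\sigma\,\overline{\partial}_\cE\partial_{K_\epsilon}s,s\bigr)\,d\epsilon,
\]
where $K_\epsilon=Ke^{\epsilon s}$, and this is \emph{not} the term $(\sqrt{-1}\,\overline{\partial}_\cE\partial_H s,s)$ that you propose to estimate: $e^{-s}\partial_K(e^s)$ differs from $\partial_H s$ (and from $\partial_K s$) by nonlinear terms built from the eigenvalue functions $(e^{\lambda_j-\lambda_i}-1)/(\lambda_j-\lambda_i)$, and the two agree only when $s$ commutes with $\partial_K s$. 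Controlling exactly this discrepancy is the analytic content of the step; it is what the paper's appeal to \cite[Prop. 3.7.1]{BradlowSpecialMetrics} (positivity of the off-diagonal eigenvalue weights) together with \cite[line 3.43]{BradlowSpecialMetrics} (the eigenvalue form of Kato's inequality) accomplishes. A Kato inequality for the single operator $\overline{\partial}_\cE\partial_H$, however correct, does not prove the lemma. A secondary imprecision: the Bochner identity you quote is the one for the full rough Laplacian, whereas the relevant operator is only its $(1,1)$-part; for self-adjoint $s$ one has $\abs{\nabla_H s}^2=2\abs{\partial_H s}^2$, and it is this factor of two, played against $\Delta=2\sqrt{-1}\Lambda\overline{\partial}\partial$, that produces the $\tfrac12$ in the statement --- your sketch asserts the $\tfrac12$ survives but never tracks it.

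Both defects are repairable inside your own framework, and the repaired argument is genuinely different from the paper's. Since $s$ commutes with $e^{\epsilon s}$, it is self-adjoint with respect to every $K_\epsilon$ along the path and its pointwise norm $\abs{s}_{K_\epsilon}=\abs{s}_K$ is independent of $\epsilon$; hence your pointwise Kato/Bochner estimate, applied at the metric $K_\epsilon$ rather than at $H$, gives $(\sqrt{-1}\Lambda\overline{\partial}_\cE\partial_{K_\epsilon}s,s)\geq\tfrac12\abs{s}\Delta\abs{s}$ for each fixed $\epsilon$, with an $\epsilon$-independent right-hand side; integrating over $\epsilon\in[0,1]$ and inserting the weights $\sigma_v$ then yields the lemma. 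That route replaces Bradlow's eigenvalue inequalities by a one-parameter family of pointwise Kato estimates, which is arguably cleaner. But as written, the reduction to $(\sqrt{-1}\,\overline{\partial}_\cE\partial_H s,s)$ is false, so the proof does not go through without this correction.
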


\begin{proof}
     Let $\Lambda$ denote the adjoint of forming the wedge product with the symplectic form $\omega$. Using \cref{eq: connection} and \Cref{lem: inequalities} the proof is identical to that in \cite[lem 3.6]{HKcorrespondence}. We have

    \begin{align*}
    (m_{\sigma,\tau}(H)- m_{\sigma,\tau}(K),s) &\geq \sqrt{-1}\Lambda(\sigma F_H-\sigma F_K,s) \\
                             &= \sqrt{-1}\Lambda(\sigma(F_K + \overline{\partial}_E(e^{-s}\partial_K(e^s))-\sigma F_K,s) \\
                             &=\sqrt{-1}\Lambda(\sigma \overline{\partial}_E(e^{-s}\partial_K e^s),s).
    \end{align*}

    Here, 
    
    \begin{equation}\label{eq:14}
    (\sigma\bar{\partial}_\cE(e^{-s}\partial_K e^s),s) = \bar{\partial}(\sigma\cdot e^{-s}\partial_Ke^s,s) + (\sigma\cdot e^{-s}\partial_K e^s,\partial_K s).
    \end{equation}
    
    Choosing a local $K_v$-orthogonal basis $(u_{v,i})$, a calculation shows $(e^{-s_v}\partial_{K_v}e^{s_v},s_v) = \frac{1}{2}\partial\abs{s_v}^2$. Multiplying by $\sigma_v$, summing over all $v\in V(Q)$ and applying $\bar{\partial}$,

    \begin{equation}\label{eq:15}
        \bar{\partial}(\sigma\cdot e^{-s}\partial_Ke^{-s},s) =\frac{1}{2}\bar{\partial} \partial \abs{\sigma^{1/2}\cdot s}^2 =\abs{\sigma^{1/2}\cdot s}\bar{\partial}\partial\abs{\sigma^{1/2}\cdot s}+\bar{\partial}\abs{\sigma^{1/2}\cdot s}\wedge\partial \abs{\sigma^{1/2}\cdot s}.
    \end{equation}

The equality $\Delta= 2 \sqrt{-1}\bar{\partial}\partial$, along with \cref{eq:14,eq:15} gives the inequality

\begin{eqnarray}
(m_{\sigma,\tau}(H)-m_{\sigma,\tau}(K),s) & \geq & \frac{1}{2}\abs{\sigma^{1/2}\cdot s}\Delta \abs{\sigma^{1/2}\cdot s} + \sqrt{-1}\Lambda\left(\bar{\partial}\abs{s}\wedge \partial\abs{\sigma^{1/2}\cdot s}\right)\nonumber\\ & & + \sqrt{-1}\Lambda(\sigma\cdot e^{-s}\partial_Ke^s,\partial_K s).\nonumber
\end{eqnarray}

Applying \cite[Prop. 3.7.1]{BradlowSpecialMetrics} to the sections $s_v$, multiplying by $\sigma$ and adding over all $v\in V(Q)$,

\begin{equation}\label{eq:16}
\sqrt{-1}\Lambda(\sigma\cdot e^{-s}\partial_Ke^s,\sigma\cdot \partial_Ks) \geq \sum_{v,i}\sqrt{-1}\Lambda(\partial\sigma^{1/2}_v\lambda_{v,i} \wedge \bar{\partial}\sigma^{1/2}_v\lambda_{v,i}).
\end{equation}

An application of \cite[line 3.43]{BradlowSpecialMetrics} to $\sigma^{1/2}\cdot s$ establishes

\begin{equation}\label{eq:17}
\sum_{v,i}\sqrt{-1}\Lambda(\partial\sigma^{1/2}_v\lambda_{v,i} \wedge \bar{\partial}\sigma^{1/2}_v\lambda_{v,i}) \geq -\sqrt{-1}\Lambda(\bar{\partial}\abs{\sigma^{1/2}\cdot s} \wedge \partial\abs{\sigma^{1/2}\cdot s}).
\end{equation}

Finally, combining \cref{eq:15,eq:16,eq:17} proves the lemma.
    
\end{proof}

This now allows us to prove \Cref{prp:bounds}. The proof is analogous to that for quiver sheaves and so only a sketch is given. The interested reader can see the proof of \cite[Prop. 3.3]{HKcorrespondence}.

\begin{proof}[Proof of \Cref{prp:bounds}]
   First it is demonstrated that the hypotheses of \Cref{lem:DonaldsonTwisted} are satisfied for the function $f=\abs{\sigma^{1/2}\cdot s}$ and suitably defined $b$. Let $C$ be the constant such that $\sup f \leq Ca(\norm{f}_{L^1})$, where $a$ is a function as in \Cref{lem:DonaldsonTwisted}. Rewriting this bound, and using the fact that $\abs{s}\leq \sigma_{\text{min}}^{-1/2}f$ the result follows up to redefining constants. 
\end{proof}

At this point all that remains is to show that stable $\overline{Q}$-bundles admit hermitian metrics satisfying the necessary constraint given in \Cref{prp:estimateimpliesmetric}.

\begin{prp}\label{prp:C^0estimate}
     If $(\cE,\phi,x,y)$ is stable then there exist constants $C_1,C_2>0$ such that for all $H,s$, $\sup\abs{s} \leq C_1\cM_{\sigma,\tau}+C_2$.
\end{prp}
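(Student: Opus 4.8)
The plan is to argue by contradiction, assuming that no such constants $C_1, C_2$ exist. This is the standard strategy in Hitchin–Kobayashi correspondences: one produces a sequence of metrics $H_j = Ke^{s_j}$ with $\sup\abs{s_j} \to \infty$ while the Lagrangian stays controlled, normalizes, extracts a weak limit, and uses the stability hypothesis to derive a contradiction. Concretely, if the estimate fails, then for each $j$ there is $s_j \in L^p_2(S)$ and $H_j \in \Met_B(L^p_2(S^0))$ with $\sup\abs{s_j} \geq j\,\cM_{\sigma,\tau}(H_j) + j$, so that setting $\ell_j = \sup\abs{s_j}$ we have $\ell_j \to \infty$ and $\cM_{\sigma,\tau}(H_j)/\ell_j \to 0$. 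I would then set $u_j = s_j/\ell_j$, which has $\sup\abs{u_j} = 1$ and (after using \Cref{prp:bounds} to convert the $L^1$ bound into an $L^1$ bound on $u_j$) lies in a bounded subset; by weak compactness in $L^2_1$ one extracts a weakly convergent subsequence $u_j \rightharpoonup u_\infty$ with $u_\infty \neq 0$.

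The heart of the argument is to show that the limit $u_\infty$ contradicts stability. Following \cite{SimpsonVariationHodge} and \cite{HKcorrespondence}, the key is a lower bound for $\cM_{\sigma,\tau}$ in terms of an integral functional of $s_j$ that survives the limit. I would use the convexity inequality from the preceding lemma, namely $(m_{\sigma,\tau}(H) - m_{\sigma,\tau}(K), s) \geq \tfrac12\abs{\sigma^{1/2}\cdot s}\Delta\abs{\sigma^{1/2}\cdot s}$, together with the expression for $\cM_{\sigma,\tau}$ in terms of $\Psi(s)$ applied to $\overline{\partial}_\cE s$, $\phi$, $x$, and $y$, to obtain an estimate of the form
\[
\cM_{\sigma,\tau}(H_j) \geq C\Big( (\sqrt{-1}F_K, s_j)_{L^2} + \big(\Psi(s_j)(\overline{\partial}_\cE s_j), \overline{\partial}_\cE s_j\big)_{L^2} + \text{(positive } \phi, x, y \text{ terms)}\Big).
\]
Dividing by $\ell_j$ and passing to the limit, the boundedness of $\cM_{\sigma,\tau}(H_j)/\ell_j$ forces the limiting functional evaluated at $u_\infty$ to vanish, which pins down the eigenvalue structure of $u_\infty$ and shows that $\overline{\partial}_\cE u_\infty$, together with the commutators $[u_\infty,\phi]$, $[u_\infty, x]$, and $[u_\infty, y]$, must vanish in an appropriate weak sense on coordinate overlaps.

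From such a limiting endomorphism $u_\infty$ one constructs a destabilizing subbundle, which is the mechanism that finally uses stability. The standard move, due to Uhlenbeck–Yau and adapted by Simpson, is to apply a cutoff function to the eigenvalues of $u_\infty$: since $u_\infty$ is self-adjoint, bounded, and satisfies $\overline{\partial}_\cE u_\infty$-type vanishing, the spectral projections (suitably regularized) define weakly holomorphic $\overline{Q}$-subsheaves $\cE'$ whose $(\sigma,\tau)$-slopes can be computed via the degree formula \cref{eq:subdegree}. One then checks that the weighted sum $\sum_k(\sigma_k\deg(\cE'_k) - \tau_k\rk(\cE'_k))$ is forced to be nonnegative, in contradiction with stability (via \Cref{rem:1}, which normalizes $\mu_{\sigma,\tau}(\cE)=0$ and requires strict negativity on proper subbundles). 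The main obstacle, and the step requiring the most care, is precisely this construction of the destabilizing subbundle: one must verify that the weak limit $u_\infty$ has the regularity needed for the spectral-projection argument to produce genuine (saturated) coherent subsheaves, and that the commutator conditions $[u_\infty, x] = [u_\infty, y] = 0$ descend correctly despite $y$ being only locally defined as a degree-one cohomology class, so that the resulting subsheaves are genuinely $\overline{Q}$-invariant. Once the destabilizing subbundle is in hand, the contradiction with stability closes the argument and establishes \Cref{prp:C^0estimate}.
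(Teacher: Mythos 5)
Your proposal follows essentially the same route as the paper: the paper's proof is precisely a proof by contradiction that, assuming the estimate fails, constructs a destabilizing $\overline{Q}$-subbundle via the Uhlenbeck--Yau regularity theorem (following \cite[\S 3.6]{HKcorrespondence}), contradicting stability. The normalization of the sequence $s_j$, extraction of a weak limit, and spectral-projection construction you describe are exactly the details the paper delegates to the cited reference, so your write-up is a faithful (and more explicit) version of the paper's argument.
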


The proof once again uses a series of lemmas, however at this point all the details are identical to \cite[\S 3.6]{HKcorrespondence} using results carried over to double quivers. 
The idea of the proof is to assume $\cM_\tau$ does not satisfy the desired estimate. We can then construct a destabilizing subbundle of $(\cE,\phi,x,y)$ using the regularity theorem of Uhlenbeck and Yau \cite[\S 7]{Uhlenbeck-Yau}, contradicting stability of $\cE$.

 \Cref{thm:1} will then follow from \Cref{prp:C^0estimate} and \Cref{prp:Simplesolves}. Therefore, if $\cE$ is stable, then it satisfies the desired estimate, and by \Cref{prp:estimateimpliesmetric} $\cE$ admits a hermitian metric satisfying the moment map equations.

\section{Torus action}\label{section: Torus Action}

In the presence of a suitable action of a torus $\bbT$ on a (possibly singular) variety $X$ the theory of Bia\l ynicki-Birula \cite{Bialynicki-Birula73} says that the homology of $X$ is determined by the homology of the connected components of the fixed point set $X^\bbT$. In this section we define a such a torus action on Nakajima bundle varieties in an attempt to better understand the fixed points, and as a result the topology of $\cM_{\ol{Q}}^{r,d}(\tau)$.

Generalizing the $\bbC^\times$ action on Higgs bundles, there is a $\bbC^\times$ action on $\cM_{\ol{Q}}^{r,d}(\tau)$. If $(A,\phi,x,y)$ is a Nakajima representation of Q, the action of $\lambda\in \bbC^\times$ is given by$$\lambda\cdot(A,\phi,x,y) =(A,\lambda \phi,\lambda x,\lambda y).$$This action preserves both the real and complex moment maps $\mu_\bbR=\tau_\bbR,\; \mu_\bbC = 0$.
\par

If $(A,\phi,x,y)$ is a fixed point, there exists a gauge transformations $g_\lambda$ such that$$g_\lambda(A,\phi,x,y)g_\lambda^{-1} = (A, \lambda\phi, \lambda x, \lambda y).$$For each $\cE_v\in V(Q)$ let $\cE_v=\oplus_i \cE_v^{w_i}$ be the weight decomposition of $\cE_v$, in which $\bbC^\times$ acts on $\cE_v^{w_i}$ with weight $w_i$. Being a fixed point of the action implies that

\begin{align*}
    \phi_v &:\cE_v^{w_i} \longrightarrow \cE_v^{w_i-1}\otimes \Omega^{1,0}(X,\SEnd(\cE_v)), \\
    x_a    &: \cE_{t(a)}^{w_i} \longrightarrow \cE_{h(a)}^{w_i}, \\
    y_a    &: \cE_{h(a)}^{w_i} \longrightarrow \cE_{t(a)}^{w_i-1} \otimes K.
\end{align*}

If $\abs{V(Q)}=n$ one can generalize the $\bbC^\times$ action to define an action of the torus $\mathbb{T}\coloneqq (\bbC^\times)^n$ on $\cM_{\ol{Q}}^{r,d}(\tau)$. For $\lambda= (\lambda_1,\cdots, \lambda_n)\in \bbT$, where $\lambda$ acts by scaling $x_a$ by $\lambda_{h(a)}$ and scaling $\phi,y$ by $\lambda_{t(a)}$. When $Q$ consists of a single vertex this recovers the $\bbC^\times$ action above. In particular if $\ol{Q}$ contains a single vertex with no loops this recovers the ordinary $\bbC^\times$-action on the moduli space of Higgs bundles.
\par

A representation $(A,\phi,x,y)$ is now a fixed point if there exists a homomorphism $g:(\bbC^\times)^n\rightarrow \cG$ such that

\begin{equation}\label{eq:fixed}
    \lambda_k\phi_k = g(\lambda)\phi_k g(\lambda)^{-1}, \quad
    \lambda_{h(a)}x_a =g(\lambda)_{h(a)}x_a g(\lambda)_{t(a)}^{-1}, \quad
    \lambda_{t(a)}y_a = g(\lambda)_{t(a)}y_a g(\lambda)_{h(a)}^{-1}.
\end{equation}

Once again, let the weight decomposition of $\cE_v$ be given by

\[
\cE_v = \bigoplus_{(w_1,\cdots, w_n)\in \bbZ^n} \cE_v^{(w_1,\cdots, w_n)}.
\]

The fixed point condition given in \cref{eq:fixed} gives the following conditions for the sections $\phi,x,y$:

\begin{align*}
    \phi_v &: \cE_j^{(w_1,\cdots,w_n)} \longrightarrow \cE_v^{(w_1,\cdots,w_j-1,\cdots, w_n)} \otimes \Omega^{1,0}(X,\SEnd(\cE)), \\
    x_a &: \cE_v^{(w_1,\cdots,w_n)} \longrightarrow \cE_v^{(w_1,\cdots,w_{t(a)}-1,\cdots, w_n)}, \\
    y_a &: \cE_v^{(w_1,\cdots,w_n)} \longrightarrow \cE_v^{(w_1,\cdots,w_{h(a)}-1,\cdots, w_n)}, \\
\end{align*}

In particular, this shows that the Higgs field, $\phi_v$ must be nilpotent. 

This provides a partial description of the fixed points of the torus action on $\cM_{\ol{Q}}^{r,d}(\tau)$. Specifically, in order for $(\cE,\phi,x,y)$ to be fixed by the torus action each $\phi_v$ must be nilpotent. Defining the nilpotent cone inside of $\cM_{\overline{Q}}(\tau)$ to be those Nakajima representations with all Higgs fields $\phi_v$ nilpotent, all fixed points for the torus action are constrained to live inside the nilpotent cone. More investigation is required in order to produce a full description of the set of fixed points. However once this is accomplished, using the Bia\l ynicki-Birula decomposition of $\cM_{\ol{Q}}^{r,d}(\tau)$ in terms of the fixed point sets should allow for a more detailed understanding of the topology of Nakajima bundle varieties.

On a more speculative note, the fixed points of the $\bbC^\times$ action on the moduli space of Higgs bundles play an important role in understanding the Hitchin system. The Hitchin system leads to a proper morphism $\cH:\cM^{\text{Higgs}}(X)\rightarrow \cB$ to an affine space $\cB$, sending a Higgs field to the coefficients of its characteristic polynomial. The fixed points of the $\bbC^\times$ action all reside in the nilpotent cone $\cH^{-1}(0)$ of Higgs bundles $(\cE,\phi)$ with nilpotent $\phi$. Given the presence of integrable systems for Nakajima quiver varieties \cite{FisherRayan,RayanSchaposnik}, as well as the similarities between Nakajima bundle varieties and Higgs moduli, the fixed points might allow for a deeper understanding of the relationship between these systems.

\section{Examples of $\ol{Q}$-bundles}\label{section:examples}

In this section we look at examples of $\ol{Q}$-bundles, some of which are novel, and some which recover familiar objects.

Let $Q$ be the quiver with a single vertex labeled $(r,d)$ and no arrows. Let $X=\bbP^1$. A bundle representation on $X$ is a rank $r$, degree $d$, complex vector bundle $\cE$ equipped with a hermitian connection $A$, together with a section $\phi\in H^0(X,\SEnd(\cE)\otimes K)$. The moment map equations reduce to the Hitchin equations \cref{eq:Hitchin}. Thus a Nakajima bundle representation on $\bbP^1$ is a Higgs bundle on $\bbP^1$. The corresponding Nakajima bundle variety is therefore empty.

As a sanity check, we specialize to the case $r=2$. Rather than studying the solutions $(A,\phi)$, we will look at Nakajima representations from the point of view of stable $\ol{Q}$-bundles. By the Grothendieck-Birkhoff theorem \cite{Grothendieckthem}, any vector bundle on $\bbP^1$ decomposes as a direct sum of line bundles, so $\cE\cong \cO(d_1)\oplus \cO(d_2)$, and the Higgs field may be written as 

\[
\phi = \begin{bmatrix} \phi_{11} & \phi_{12} \\
				\phi_{21} & \phi_{22} \end{bmatrix},
\]

\noindent where $\phi_{ij}$ is a holomorphic section of $\SHom(\cO(d_j),\cO(d_i)\otimes K)$. Without loss of generality assume that $d_2\geq d_1$. Since any line bundle of negative degree has no nonzero sections, $\phi_{11},\phi_{22},\phi_{12}=0$. Hence $\cO(d_2)$ is a $\phi$-invariant subbundle of $\cE$ with 

\[
d_2=\mu(\cO(d_2)) \geq \mu(\cE) = \frac{d_1+d_2}{2}.
\]

\noindent Therefore there are no stable $\overline{Q}$ bundles of $\bbP^1$ of rank $2$ recovering the well known fact that there are no stable Higgs bundles of rank $2$ over $\bbP^1$.

Now consider $Q$ with a loop added to its vertex. A $\overline{Q}$-bundle is a tuple $(\cE,\phi,x,y)$ with $x\in H^0(X,\SEnd(\cE))$, $y\in H^1(X,\SEnd(\cE)\otimes K)$ and $\phi\in H^0(X, \SEnd(\cE)\otimes \Omega^{1,0})$. Let $\cE \cong \cO(d_1)\oplus \cO(d_2)$. If $\phi$ is holomorphic,
 $\phi=\begin{bmatrix} 0 & 0 \\
		\phi_{21} & 0 \end{bmatrix}$ 
  and $\cO(d_2)$ is $\phi$-invariant. The complex moment map states that $x,y$ commute. Therefore up to the $\cG$ action, $x,y$ are diagonal matrices. Then $\cE$ is not stable as $\cO(d_2)$ is an invariant subbundle with $\mu(\cO(d_2))\geq \mu(\cE)$.

The above examples illustrate that in order to find new examples of stable objects, the condition that $\phi$ be holomorphic must be weakened. Up to the action of $\cG$ we have the following descriptions of the sections $x$ and $y$,

\[
x= \begin{bmatrix} x_{11} & 0 \\
				x_{21}(z) & x_{22}\end{bmatrix}, \quad
y = \begin{bmatrix} y_{11} & y_{12} \\ 
                    y_{21}(z) & y_{22} \end{bmatrix} dz\wedge d\bar{z},
\]

where the diagonal entries are constant and $y_{12}$ is either constant or $0$ depending on whether $d_2=d_1$ or $d_2>d_1$. In the latter case, from the equation $\overline{\partial}\phi = -[x,y]$, we see that $\phi$ can have at most linear dependence on $\bar{z}$ and therefore has the form

\begin{equation}\label{eq:unstablephi}
\phi =\begin{bmatrix} 0 & 0 \\
(x_{21}y_{11}+x_{22}y_{21}-x_{11}y_{21}-x_{21}y_{22})\ol{z}+ \phi_{22} & 0
\end{bmatrix}dz,
\end{equation}

for $\phi_{22}\in H^0(X,\SHom(\cO(d_1),\cO(d_2))\otimes K)$. From this, it can be seen that $\cO(d_2)$ is a destabilizing subbundle of $\cE$. Thus no stable $\ol{Q}$-bundles exist in the case $d_2>d_1$. However, if $d_2=d_1$ then the most general form for $\phi$ is

\begin{equation}\label{eq:stablephi}
    \phi=\begin{bmatrix}
        (-x_{21}y_{12})\ol{z} & (x_{11}y_{12}-x_{22}y_{12})\ol{z} \\
        (x_{21}y_{11}+x_{22}y_{21}-x_{11}y_{21}-x_{21}y_{22})\ol{z} & (x_{21}y_{12})\ol{z}
    \end{bmatrix} dz.
\end{equation}

Taking the constants $x_{ii},y_{ii},y_{12}$ appropriately, many examples of stable $\ol{Q}$-bundles may be realized in this way.

Notice that when $y_{12}=0$ then \cref{eq:stablephi} has the same form as \cref{eq:unstablephi}, hence we see a necessary condition for $(\cE,\phi,x,y)$ to be a stable $\ol{Q}$-bundle is that $x,y$ not be simultaneously triangularizable. Up to the $\cG$ action stable $\ol{Q}$-bundles are of the form

\begin{equation}
    x=\begin{bmatrix} x_{11} & x_{12} \\
                        0    & x_{22}
      \end{bmatrix},
    y=\begin{bmatrix} y_{11} & y_{12} \\
                      y_{21} & y_{22}
      \end{bmatrix},
    \phi=\begin{bmatrix} 0 & (x_{11}-x_{22})y_{12}\ol{z} \\
                    (x_{22}-x_{11})y_{21}\ol{z} & 0
        \end{bmatrix},
\end{equation}  

where $y_{12}\neq 0$. A quick check shows that $\text{dim}_\bbC\cM_{\ol{Q}}^{r,d}(\tau)=7$.

For the next example consider the $A_2$ quiver, having two vertices connected by a single edge. A stable $Q$-bundle on a Riemann surface $X$ is a pair of vector bundles $\cE_1,\cE_2$ together with a holomorphic morphism $x:\cE_1\rightarrow \cE_2$, i.e. a holomorphic triple on $X$. These objects, as well as the moduli spaces they give rise to, have been thoroughly studied, for instance in \cite{HolomorphicTriples}.

To see what the construction of $\ol{Q}$-bundles might yield, let us assume the quiver has label $((1,d),(2,d'))$. In the case $X=\bbP^1$, a $\ol{Q}$-bundle consists of two vector bundles $\cE_1= \cO(d)$ and 
\newline $\cE_2= \cO(d_1)\oplus \cO(d_2)$, where $d_1+d_2=d'$, with the appropriate sections $\phi_1,\phi_2,x,y$ subject to the conditions

\begin{equation}\label{eq:conditions}
\overline{\partial}_{\cE_1}- yx =0, \quad \overline{\partial}_{\cE_2} +xy=0.
\end{equation}

In order for stable objects to exist assume $d_2>d$. If $d>d_2\geq d_1$, both $x,y=0$ and the $\ol{Q}-$bundle reduces to a pair of Higgs bundles on $\bbP^1$. On the other hand, if $d>d_1$, then degree restrictions together with \cref{eq:conditions} imply every $\ol{Q}$-bundle contains $(\cO(d),\cO(d_2),\phi_1,\phi_2,x,y)$ as a destabilizing subbundle. 

This leaves the case $d_2\geq d_1 \geq d$. It can be checked that stable $\ol{Q}$-bundles are given by

\begin{align}
x=\begin{bmatrix} x_1 \\ x_2 \end{bmatrix}, &\quad
y= \begin{bmatrix} y_1 & y_2 \end{bmatrix} dz,\\
\phi_1 = ( x_1y_1 + x_2y_2)\ol{z}dz, &\quad 
\phi_2 = \begin{bmatrix} x_1y_1\ol{z} & x_1y_2\ol{z} \\
                         x_2y_1\ol{z} +\phi_{22} & x_2y_2\ol{z} \end{bmatrix}.
\end{align}

Thus in order for stable bundles to exist, it must be that $d'\geq 2d$. Taking $d'=2d$, the expected complex dimension $\cM_{\ol{Q}}^{r,d}(\tau)$ is 

\[
2(d_1-d+1)+2(d_2-d+1) =4.
\]

Finally, moving away from $\bbP^1$ let $X$ be a Riemann surface of genus $g\geq 2$. Let $Q$ be the quiver with a single looped vertex. Fix $\theta$, a square root of the canonical bundle. Taking the vector bundle $\cE=\theta \otimes \theta^{-1}$ a stable $\ol{Q}$-bundle is given by the sections

\[
\phi = \begin{bmatrix} 0 & \alpha \\
                        1 & 0
        \end{bmatrix}, \quad 
x   = \begin{bmatrix} 1 & 0 \\
                      \beta & -1 
     \end{bmatrix},\quad 
y   = \begin{bmatrix}
        y_{11} & 0 \\
        \delta & -y_{11}
        \end{bmatrix}.
\]
 
Here $\alpha$ is interpreted as a quadratic differential on $X$, $\beta$ a holomorphic $1$-form, and $\delta \in H^1(X,K^2)$.
\par

Denote an infinitesimal deformation of the corresponding Nakajima representation $(A,\phi,x,y)$ by $(\dot{A},\dot{\phi},\dot{x},\dot{y})$. Such a deformation arises from an element $\alpha\in \Omega^0(X,\SEnd(\cE))$ whenever

\begin{equation}\label{eq:d_1}
    \dot{A}=d_A\alpha, \quad \dot{\phi}=[\phi,\alpha], \quad \dot{x} = [x,\alpha],\quad \dot{y}= [y,\alpha].
\end{equation}

We obtain a $2$-term complex 

\begin{equation}
\begin{tikzcd}
    \Omega^0(X,\SEnd(\cE)) \arrow[r,"d_1"] &A \arrow[r,"d_2"] & \Omega^2(X,\SEnd(\cE)) \oplus \Omega^2(X,\SEnd(\cE)\otimes \bbC)
\end{tikzcd},
\end{equation}

where 

\begin{equation}
    A= \Omega^1(X,\SEnd(\cE)) \oplus \Omega^{1,0}(X,\SEnd(\cE)\otimes \bbC) \oplus \Omega^0(X,\SEnd(\cE)) \oplus \Omega^{1,0}(X,\SEnd(\cE)\otimes \bbC).
\end{equation}

The map $d_1$ is given by \cref{eq:d_1} and $d_2$ by the linearization of the moment maps. An index-theoretic argument similar to that in \cite[Sec. 5]{HitchinSelf-Duality} demonstrates that the expected dimension of $\cM_{\ol{Q}}^{r,d}(\tau)$ is $6(g-1)=6$.

\bibliographystyle{alpha}
\bibliography{Bibliography}

\pagebreak

\Addresses

\end{document}